\title{Counting smaller trees in the Tamari order}
\author{Gr\'egory Chatel, Viviane Pons}
\address{Laboratoire d'Informatique Gaspard Monge, Université Paris-Est
    Marne-la-Vallée, 5 Boulevard Descartes, Champs-sur-Marne,
    77454 Marne-la-Vallée cedex 2, France}
\keywords{binary trees, Tamari lattice, Tamari intervals}
\newtheorem{Theoreme}{Theorem}[section]
\newtheorem{Proposition}[Theoreme]{Proposition}
\newtheorem{Definition}[Theoreme]{Definition}
\DeclareMathOperator{\PBT}{\textbf{PBT}}
\DeclareMathOperator{\HT}{\textbf{H}}
\DeclareMathOperator{\ET}{\textbf{E}}
\DeclareMathOperator{\PT}{\textbf{P}}
\newcommand\dec{F_{\ge}}
\newcommand\inc{F_{\le}}
\newcolumntype{C}{>{\centering\arraybackslash} m{3.2cm}}
\newcolumntype{D}{>{\centering\arraybackslash} m{3.5cm}}
\begin{document}

\maketitle

\begin{abstract}
We introduce new objects, the interval-posets, that encode intervals of the Tamari lattice. We then find a combinatorial interpretation of the bilinear form that appears in the functional equation of Tamari intervals described by Chapoton. Thus, we retrieve this functional equation and prove that the polynomial recursively computed from the bilinear form on each tree $T$ counts the number of trees smaller than $T$ in the Tamari order.

Nous introduisons un nouvel objet, les intervalles-posets, pour encoder les intervalles de Tamari. Nous donnons ainsi une interprétation combinatoire à la forme bilinéaire qui apparaît dans l'équation fonctionnelle des intervalles de Tamari que donne Chapoton. De cette façon, nous retrouvons d'une nouvelle manière cette équation fonctionnelle et prouvons que le polynôme calculé récur-sivement à partir de la forme bilinéaire pour chaque arbre $T$ compte le nombre d'arbres plus petits que $T$ dans l'ordre de Tamari.
\end{abstract}

\section{Introduction}
\label{sec:Intro}

The combinatorics of planar binary trees has already being linked with interesting algebraic properties. Loday and Ronco first introduced the $\PBT$ Hopf Algebra based on these objects \cite{PBT1}. It was re-constructed by Hivert, Novelli and Thibon \cite{plactic-monoid} through the introduction of the sylvester monoid. The structure of $\PBT$ involves a very nice object which is linked to both algebra and classical algorithmic: the \emph{Tamari lattice}. 

This order on binary trees is based on the \emph{right rotation} operation (see Figure \ref{fig:tree-right-rotation}), commonly used in sorting algorithms through binary search trees. The lattice itself first appeared in the context of the associahedron \cite{Tamari}. As its vertices are counted by Catalan numbers, the covering relations can be described by many combinatorial objects \cite{Stanley} two common ones being planar binary trees and Dyck paths. Recently Chapoton gave a formula for the number of intervals \cite{Chap}:

\begin{equation}
\label{eq:intervals-formula}
I_n = \frac{2(4n+1)!}{(n+1)!(3n+2)!},
\end{equation}
where $I_n$ is the number of intervals of the Tamari lattice of binary trees of size $n$. This formula was very recently generalized to a new set of lattices, the $m$-Tamari \cite{mTamari}.

It has been known since Bj\"orner and Wachs \cite{BW} that linear extensions of a certain labelling of binary trees correspond to intervals of the weak order on permutations. This was more explicitly described in \cite{plactic-monoid} with sylvester classes. The elements of the basis $\PT$ of $\PBT$ are defined as a sum on a sylvester class of elements of $\textbf{FQSym}$. The $\PBT$ algebra also admits two other bases $\HT$ and $\ET$ which actually correspond to respectively initial and final intervals of the Tamari order. They can be indexed by plane forests and, with a well chosen labelling, their linear extensions are intervals of the weak order on permutations corresponding to a union of sylvester classes. In this paper, we introduce a more general object, the Tamari interval-poset, which encodes a general interval of the Tamari lattice and whose linear extensions are exactly the corresponding sylvester classes (and so an interval of the weak order). This new object has nice combinatorial properties and allows for computation on Tamari intervals.

Thereby, we give a new proof of the formula of Chapoton \eqref{eq:intervals-formula}. This proof is based on the study of a bilinear form that already appeared in \cite{Chap} but was not explored yet. It leads to the definition of a new family of polynomials:

\begin{Definition}
\label{def:tamari-polynomials}
Let $T$ be a binary tree, the polynomial $\mathcal{B}_T(x)$ is recursively defined by
\begin{align}
\mathcal{B}_\emptyset &:= 1 \\
\mathcal{B}_T(x) &:= x \mathcal{B}_L(x) \frac{x \mathcal{B}_R(x) - \mathcal{B}_R(1)}{x - 1}
\end{align}
where $L$ and $R$ are respectively the left and right subtrees of $T$. We call $\mathcal{B}_T(x)$ the \emph{Tamari polynomial} of $T$ and the \emph{Tamari polynomials} are the set of all polynomials obtained by this process.
\end{Definition}

This family of polynomials is yet unexplored in this context but a different computation made by Chapoton \cite{ChapBiVar} on rooted trees seems to give a bivariate version. Our approach on Tamari interval-posets allows us to prove the following theorem:

\begin{Theoreme}
\label{thm:smaller-trees}
Let $T$ be a binary tree. Its Tamari polynomial $\mathcal{B}_T(x)$ counts the trees smaller than $T$ in the Tamari order according to the number of nodes on their left border. In particular, $\mathcal{B}_T(1)$ is the number of trees smaller than $T$.

Symmetrically, if $\tilde{\mathcal{B}}_T$ is defined by exchanging the role of left and right children in Definition \ref{def:tamari-polynomials}, then it counts the number of trees greater than $T$ according to the number of nodes on their right border.
\end{Theoreme}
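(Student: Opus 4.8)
The plan is to prove the statement by induction on the size of the tree $T$, matching the recursive structure of Definition \ref{def:tamari-polynomials} against a recursive decomposition of the set of trees smaller than $T$ in the Tamari order. First I would set up the bivariate reading of the polynomial: interpret $\mathcal{B}_T(x) = \sum_{T' \le T} x^{b(T')}$, where $b(T')$ denotes the number of nodes on the left border of $T'$, and check the base case $\mathcal{B}_\emptyset = 1$ (the empty tree has a single tree below it, itself, contributing $x^0 = 1$). The crux is then to show that if $T$ has left subtree $L$ and right subtree $R$, then the trees $T'$ with $T' \le T$ decompose in a way whose generating function is exactly $x\,\mathcal{B}_L(x)\,\frac{x\mathcal{B}_R(x) - \mathcal{B}_R(1)}{x-1}$.

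The key combinatorial step is to understand how an arbitrary tree $T' \le T$ relates to the subtrees $L$ and $R$ of $T$. Here I would lean on the interval-poset machinery introduced earlier: the interval $[\hat 0, T]$ (the initial interval of all trees below $T$) should be encoded by an interval-poset whose relations are determined by those of the subtrees together with the relations forced at the root. Concretely, I expect that a tree $T'$ smaller than $T$ can be written by choosing a tree $L' \le L$ sitting below the root on the left and then attaching, along the left border, a sequence of trees each smaller than $R$; the factor $x$ accounts for the root node itself lying on the left border of $T'$, the factor $\mathcal{B}_L(x)$ tracks the contribution of the left part $L'$ to the left border, and the remaining factor must encode the possible configurations coming from $R$. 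I would verify this decomposition by translating it into the language of interval-posets, where the disjoint union and the root-relation give a clean product/grafting formula.

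The main obstacle I anticipate is the factor $\frac{x\mathcal{B}_R(x) - \mathcal{B}_R(1)}{x-1}$, which is not a simple product and therefore does not correspond to a naive grafting of a single copy of a tree below $R$. Writing it as the finite sum $\frac{x\mathcal{B}_R(x) - \mathcal{B}_R(1)}{x-1} = \sum_{R' \le R} \bigl(x^{b(R')} + x^{b(R')-1} + \cdots + x + 1\bigr)$, one sees that each $R' \le R$ contributes a full geometric-type run of powers of $x$; combinatorially this should reflect that, for each choice of $R'$, there is a whole family of trees $T'$ obtained by sliding nodes of $R'$ onto, or off of, the left border of $T'$ via right rotations at the root, and the number of left-border nodes ranges over an interval. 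Making this sliding precise — showing that exactly the powers $x^0$ through $x^{b(R')}$ occur, each exactly once — is the heart of the argument, and I would prove it by exhibiting an explicit bijection (or an interval-poset description) realizing this range. Once that identification is established, multiplying by $x\,\mathcal{B}_L(x)$ and summing over all valid $L'$ and $R'$ recovers the recursion, completing the induction. The symmetric statement for $\tilde{\mathcal{B}}_T$ then follows immediately by applying the left-right mirror symmetry of the Tamari order, which reverses the order relation and exchanges the left and right borders.
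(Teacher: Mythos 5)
Your plan is essentially the paper's proof: the authors likewise induct on the root decomposition $T=k(L,R)$, encode the initial interval $\sum_{T'\le T}[T',T]$ by interval-posets, and show (Propositions \ref{prop:combinatorial-equivalence-composition} and \ref{prop:sum-composition}) that the divided-difference factor arises exactly from your ``sliding'' step, namely the $b(R')+1$ choices of how many roots of the decreasing forest of $R'$ get attached below the new root, each occurring exactly once. The only caution is that your initial description of the right part (``a sequence of trees each smaller than $R$'') is not the correct decomposition --- what is chosen is a single $R'\le R$ together with an integer $j\in\{0,\dots,b(R')\}$ recording how many of the leftmost components of its decreasing forest hang below $k$ --- but this is precisely the picture you arrive at in your final paragraph, so the argument goes through as in the paper.
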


This theorem will be proven in Section \ref{sec:main-result}. In Section \ref{sec:def}, we recall some definitions and properties of the Tamari lattice and introduce the notion of \emph{interval-poset} to encode a Tamari interval. In Section \ref{sec:tamari-polynomials}, we show the implicit bilinear form that appears in the functional equation  of the generating functions of Tamari intervals. We then explain how interval-posets can be used to give a combinatorial interpretation of this bilinear form and thereby give a new proof of the functional equation. Theorem \ref{thm:smaller-trees} follows naturally. In Section \ref{sec:final}, we give two independent contexts in which our problem can be generalized: flows of rooted trees and $m$-Tamari intervals.

\section{Definitions of Tamari interval-posets}
\label{sec:def}

\subsection{Binary trees and Tamari order}
\label{sec:tamari}

A binary tree is recursively defined by being either the empty tree $(\emptyset)$ or a pair of  binary trees, respectively called \emph{left} and \emph{right} subtrees, grafted on an internal node. If a tree $T$ is composed of a root node $x$ with $A$ and $B$ as respectively left and right subtrees, we write $T = x(A,B)$. The number of nodes of a tree $T$ is called the size of $T$. The Tamari order is an order on trees of a given size using the \emph{rotation} operation.

\begin{Definition}
Let $y$ be a node of $T$ with a non-empty left subtree $x$. The \emph{right rotation} of $T$ on $y$ is a local rewriting which follows Figure~\ref{fig:tree-right-rotation}, that is replacing $y( x(A,B), C)$ by $x(A,y(B,C))$ (note that $A$, $B$, or $C$ might be empty).

\begin{figure}[ht]
\centering
\scalebox{0.7}{
\begin{tikzpicture}
\node(TX1) at (-3,0){x};
\node(TY1) at (-2,1){y};
\node(TA1) at (-3.5,-1){A};
\node(TB1) at (-2.5,-1){B};
\node(TC1) at (-1,0){C};
\node(to) at (0,0){$\to$};
\node(TX2) at (2,1){x};
\node(TY2) at (3,0){y};
\node(TA2) at (1,0){A};
\node(TB2) at (2.5,-1){B};
\node(TC2) at (3.5,-1){C};

\draw (TA1) -- (TX1);
\draw (TB1) -- (TX1);
\draw (TC1) -- (TY1);
\draw (TX1) -- (TY1);

\draw (TA2) -- (TX2);
\draw (TB2) -- (TY2);
\draw (TC2) -- (TY2);
\draw (TY2) -- (TX2);
\end{tikzpicture}
}
\caption{Right rotation on a binary tree.}

\label{fig:tree-right-rotation}

\end{figure}
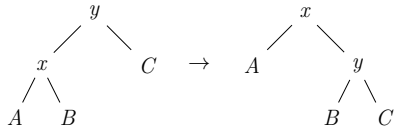

\end{Definition}

The Tamari order is the transitive and reflexive closure of the right
rotation: a tree $T'$ is greater than a tree $T$ if $T'$ can be
obtained by applying a sequence of right rotations on $T$. It is
actually a lattice \cite{Tamari}, see Figure \ref{fig:tamari-order}
for some examples. One of the purposes of this article is to define a
combinatorial object that would correspond to Tamari intervals.

\begin{figure}[ht]

\centering
\scalebox{0.8}{
\begin{tabular}{cc}
\begin{tikzpicture}
\node(T1) at (1,0) {
  \scalebox{0.7}{
    \begin{tikzpicture}
      \node(N1) at (0,-1){$\bullet$};
      \node(N2) at (0.5,-0.5){$\bullet$};
      \node(N3) at (1,0){$\bullet$};
      
      \draw (N1.center) -- (N2.center);
      \draw (N2.center) -- (N3.center);
    \end{tikzpicture}
  }
};
\node(T2) at (0,-2.2) {
  \scalebox{0.7}{
    \begin{tikzpicture}
      \node(N1) at (0,-0.5){$\bullet$};
      \node(N2) at (0.5,0){$\bullet$};
      \node(N3) at (1,-0.5){$\bullet$};
      
      \draw (N1.center) -- (N2.center);
      \draw (N3.center) -- (N2.center);
    \end{tikzpicture}
  }
};

\node(T3) at (2,-1.5) {
  \scalebox{0.7}{
    \begin{tikzpicture}
      \node(N1) at (0,-0.5){$\bullet$};
      \node(N2) at (0.5,-1){$\bullet$};
      \node(N3) at (0.5,0){$\bullet$};
      
      \draw (N1.center) -- (N3.center);
      \draw (N2.center) -- (N1.center);
    \end{tikzpicture}
  }
};

\node(T4) at (2,-3) {
  \scalebox{0.7}{
    \begin{tikzpicture}
      \node(N1) at (0,0){$\bullet$};
      \node(N2) at (0,-1){$\bullet$};
      \node(N3) at (0.5,-0.5){$\bullet$};
      
      \draw (N3.center) -- (N1.center);
      \draw (N2.center) -- (N3.center);
    \end{tikzpicture}
  }
};

\node(T5) at (1,-4.5) {
  \scalebox{0.7}{
    \begin{tikzpicture}
      \node(N1) at (0,0){$\bullet$};
      \node(N2) at (0.5,-0.5){$\bullet$};
      \node(N3) at (1,-1){$\bullet$};
      
      \draw (N3.center) -- (N2.center);
      \draw (N2.center) -- (N1.center);
    \end{tikzpicture}
  }
};

\draw (T1) -- (T2);
\draw (T1) -- (T3);
\draw (T3) -- (T4);
\draw (T4) -- (T5);
\draw (T2) -- (T5);

\end{tikzpicture}
&
\begin{tikzpicture}
\node(T1) at (4,0){
	\scalebox{0.5}{
	\begin{tikzpicture}
		\node(N1) at (0,-1.5){$\bullet$};
		\node(N2) at (0.5,-1){$\bullet$};
		\node(N3) at (1,-0.5){$\bullet$};
		\node(N4) at (1.5,0){$\bullet$};
		
		\draw (N3.center) -- (N4.center);
		\draw (N2.center) -- (N3.center);
		\draw (N1.center) -- (N2.center);
	\end{tikzpicture}
	}
};
\node(T2) at (2,-0.5){
	\scalebox{0.5}{
	\begin{tikzpicture}
		\node(N1) at (0,-1){$\bullet$};
		\node(N2) at (0.5,-1.5){$\bullet$};
		\node(N3) at (1,-0.5){$\bullet$};
		\node(N4) at (1.5,0){$\bullet$};
		
		\draw (N3.center) -- (N4.center);
		\draw (N1.center) -- (N3.center);
		\draw (N2.center) -- (N1.center);
	\end{tikzpicture}
	}
};

\node(T3) at (6,-1){
	\scalebox{0.5}{
	\begin{tikzpicture}
		\node(N1) at (0,-1){$\bullet$};
		\node(N2) at (0.5,-0.5){$\bullet$};
		\node(N3) at (1,-1){$\bullet$};
		\node(N4) at (1.5,0){$\bullet$};
		
		\draw (N2.center) -- (N4.center);
		\draw (N1.center) -- (N2.center);
		\draw (N3.center) -- (N2.center);
	\end{tikzpicture}
	}
};

\node(T4) at (0,-1){
	\scalebox{0.5}{
	\begin{tikzpicture}
		\node(N1) at (0,-0.5){$\bullet$};
		\node(N2) at (0.5,-1.5){$\bullet$};
		\node(N3) at (1,-1){$\bullet$};
		\node(N4) at (1.5,0){$\bullet$};
		
		\draw (N1.center) -- (N4.center);
		\draw (N3.center) -- (N1.center);
		\draw (N2.center) -- (N3.center);
	\end{tikzpicture}
	}
};

\node(T5) at (8,-2){
	\scalebox{0.5}{
	\begin{tikzpicture}
		\node(N1) at (0,-0.5){$\bullet$};
		\node(N2) at (0.5,-1){$\bullet$};
		\node(N3) at (1,-1.5){$\bullet$};
		\node(N4) at (1.5,0){$\bullet$};
		
		\draw (N1.center) -- (N4.center);
		\draw (N2.center) -- (N1.center);
		\draw (N3.center) -- (N2.center);
	\end{tikzpicture}
	}
};

\node(T6) at (4,-2.5){
	\scalebox{0.5}{
	\begin{tikzpicture}
		\node(N1) at (0,-1){$\bullet$};
		\node(N2) at (0.5,-0.5){$\bullet$};
		\node(N3) at (1,0){$\bullet$};
		\node(N4) at (1.5,-0.5){$\bullet$};
		
		\draw (N4.center) -- (N3.center);
		\draw (N2.center) -- (N3.center);
		\draw (N1.center) -- (N2.center);
	\end{tikzpicture}
	}
};

\node(T7) at (2,-3.5){
	\scalebox{0.5}{
	\begin{tikzpicture}
		\node(N1) at (0,-0.5){$\bullet$};
		\node(N2) at (0.5,-1){$\bullet$};
		\node(N3) at (1,0){$\bullet$};
		\node(N4) at (1.5,-0.5){$\bullet$};
		
		\draw (N4.center) -- (N3.center);
		\draw (N1.center) -- (N3.center);
		\draw (N1.center) -- (N2.center);
	\end{tikzpicture}
	}
};

\node(T8) at (6,-3){
	\scalebox{0.5}{
	\begin{tikzpicture}
		\node(N1) at (0,-0.5){$\bullet$};
		\node(N2) at (0.5,0){$\bullet$};
		\node(N3) at (1,-1){$\bullet$};
		\node(N4) at (1.5,-0.5){$\bullet$};
		
		\draw (N4.center) -- (N2.center);
		\draw (N1.center) -- (N2.center);
		\draw (N3.center) -- (N4.center);
	\end{tikzpicture}
	}
};

\node(T9) at (4,-3.9){
	\scalebox{0.5}{
	\begin{tikzpicture}
		\node(N1) at (0,-0.5){$\bullet$};
		\node(N2) at (0.5,0){$\bullet$};
		\node(N3) at (1,-0.5){$\bullet$};
		\node(N4) at (1.5,-1){$\bullet$};
		
		\draw (N1.center) -- (N2.center);
		\draw (N3.center) -- (N2.center);
		\draw (N4.center) -- (N3.center);
	\end{tikzpicture}
	}
};

\node(T10) at (0,-4){
	\scalebox{0.5}{
	\begin{tikzpicture}
		\node(N1) at (0,0){$\bullet$};
		\node(N2) at (0.5,-1.5){$\bullet$};
		\node(N3) at (1,-1){$\bullet$};
		\node(N4) at (1.5,-0.5){$\bullet$};
		
		\draw (N4.center) -- (N1.center);
		\draw (N3.center) -- (N4.center);
		\draw (N2.center) -- (N3.center);
	\end{tikzpicture}
	}
};

\node(T11) at (8,-5){
	\scalebox{0.5}{
	\begin{tikzpicture}
		\node(N1) at (0,0){$\bullet$};
		\node(N2) at (0.5,-1){$\bullet$};
		\node(N3) at (1,-1.5){$\bullet$};
		\node(N4) at (1.5,-0.5){$\bullet$};
		
		\draw (N4.center) -- (N1.center);
		\draw (N2.center) -- (N4.center);
		\draw (N3.center) -- (N2.center);
	\end{tikzpicture}
	}
};

\node(T12) at (2,-5){
	\scalebox{0.5}{
	\begin{tikzpicture}
		\node(N1) at (0,0){$\bullet$};
		\node(N2) at (0.5,-1){$\bullet$};
		\node(N3) at (1,-0.5){$\bullet$};
		\node(N4) at (1.5,-1){$\bullet$};
		
		\draw (N3.center) -- (N1.center);
		\draw (N4.center) -- (N3.center);
		\draw (N2.center) -- (N3.center);
	\end{tikzpicture}
	}
};

\node(T13) at (6,-5.5){
	\scalebox{0.5}{
	\begin{tikzpicture}
		\node(N1) at (0,0){$\bullet$};
		\node(N2) at (0.5,-0.5){$\bullet$};
		\node(N3) at (1,-1.5){$\bullet$};
		\node(N4) at (1.5,-1){$\bullet$};
		
		\draw (N2.center) -- (N1.center);
		\draw (N4.center) -- (N2.center);
		\draw (N3.center) -- (N4.center);
	\end{tikzpicture}
	}
};

\node(T14) at (4,-6){
	\scalebox{0.5}{
	\begin{tikzpicture}
		\node(N1) at (0,0){$\bullet$};
		\node(N2) at (0.5,-0.5){$\bullet$};
		\node(N3) at (1,-1){$\bullet$};
		\node(N4) at (1.5,-1.5){$\bullet$};
		
		\draw (N2.center) -- (N1.center);
		\draw (N3.center) -- (N2.center);
		\draw (N4.center) -- (N3.center);
	\end{tikzpicture}
	}
};

\draw (T2) -- (T1);
\draw (T3) -- (T1);
\draw (T4) -- (T2);
\draw (T5) -- (T3);
\draw (T5) -- (T4);
\draw (T6) -- (T1);
\draw (T7) -- (T2);
\draw (T7) -- (T6);
\draw (T8) -- (T3);
\draw (T9) -- (T6);
\draw (T9) -- (T8);
\draw (T10) -- (T4);
\draw (T11) -- (T5);
\draw (T11) -- (T10);
\draw (T12) -- (T7);
\draw (T12) -- (T10);
\draw (T13) -- (T8);
\draw (T13) -- (T11);
\draw (T14) -- (T9);
\draw (T14) -- (T12);
\draw (T14) -- (T13);
\end{tikzpicture}
\end{tabular}
}
\caption{Tamari lattices of size 3 and 4.}

\label{fig:tamari-order}

\end{figure}
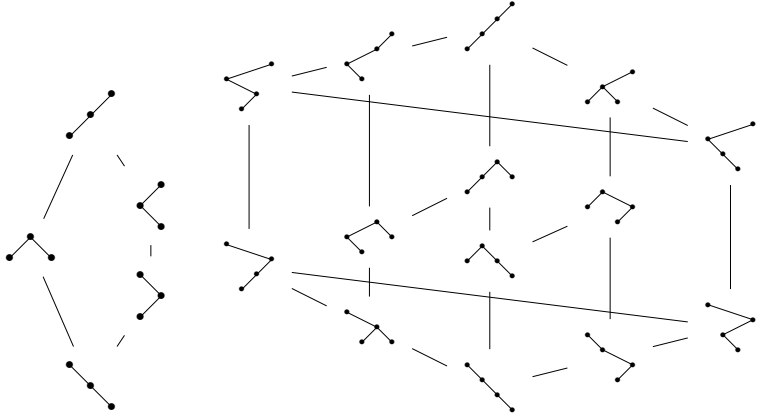

The Tamari lattice is a quotient of the weak order on permutations
\cite{plactic-monoid}. To understand the relation between the two
orders, we need the notion of \emph{binary search tree}.

\begin{Definition}
\label{def:binary-search-tree}
A \emph{binary search tree} is a labelled binary tree where for each node of
label $k$, any label in his left (resp. right) subtree is lower
than or equal to (resp. greater than) $k$.
\end{Definition}

Figure \ref{fig:bst-example} shows an example of binary search
tree. For a given binary tree $T$ of size $n$, there is a unique
labelling of $T$ with $1, \dots, n$ such that $T$ is a binary search
tree. Such a labelled tree can then be seen as a poset. For example,
the tree
\begin{center}
\scalebox{0.7}{
  \begin{tikzpicture}
    \node(N1) at (0,-0.5){1};
    \node(N2) at (0.5,0){2};
    \node(N3) at (1,-0.5){3};
    
    \draw (N1) -- (N2);
    \draw (N3) -- (N2);
  \end{tikzpicture}
  }
\end{center}
is the poset where $1$ and $3$ are smaller than $2$. We write $1 \prec
2$ and $3 \prec 2$. A linear extension of this poset is a permutation
where if $a \prec b$ in the poset, then the number $a$ is before $b$
in the permutation. For the above tree, the linear extensions are
$132$ and $312$. The sets of permutations corresponding to the linear
extensions of the binary trees of size $n$ form a partition of
$\mathfrak{S}_n$ and more precisely, each set is an interval of the
right weak order on permutations called a \emph{sylvester class} and
the Tamari order is a lattice on these classes
\cite{plactic-monoid}. See Figures \ref{fig:bst-example} for examples
of sylvester classes.

\begin{figure}[ht]
\centering
\begin{tabular}{cl | cc}
\begin{tikzpicture}
\node(T1) at (0,-2){1};
\node(T2) at (0.5,-1){2};
\node(T3) at (1,-2){3};
\node(T4) at (1.5,0){4};
\node(T5) at (2,-1){5};
\draw(T1)--(T2);
\draw(T3)--(T2);
\draw(T2)--(T4);
\draw(T5)--(T4);
\end{tikzpicture}
&
\scalebox{0.7}{
\begin{tikzpicture}
\node(P13254) at (1,0){13254};
\node(P31254) at (0,-1){31254};
\node(P13524) at (2,-1){13524};
\node(P31524) at (0,-2){31524};
\node(P15324) at (2,-2){15324};
\node(P35124) at (0,-3){35124};
\node(P51324) at (2,-3){51324};
\node(P53124) at (1,-4){53124};
\draw(P53124) -- (P51324);
\draw(P53124) -- (P35124);
\draw(P51324) -- (P15324);
\draw(P35124) -- (P31524);
\draw(P15324) -- (P13524);
\draw(P31524) -- (P31254);
\draw(P31524) -- (P13524);
\draw(P13524) -- (P13254);
\draw(P31254) -- (P13254);
\end{tikzpicture}
}
&
\begin{tikzpicture}
\node(P1) at (0,1.5){123};
\node(P2) at (0.5,0.5){213};
\node(P3) at (-0.5,0.5){132};
\node(P4) at (0.5,-0.5){231};
\node(P5) at (-0.5,-0.5){312};
\node(P6) at (0,-1.5){321};

\draw (P1) -- (P2);
\draw (P1) -- (P3);
\draw (P2) -- (P4);
\draw (P3) -- (P5);
\draw (P4) -- (P6);
\draw (P5) -- (P6);

\draw (0,1.5) ellipse (0.4cm and 0.4cm);
\draw (-0.5,0) ellipse (0.6cm and 1cm) ;
\draw (0.5,0.5) ellipse (0.4cm and 0.4cm) ;
\draw (0.5,-0.5) ellipse (0.4cm and 0.4cm) ;
\draw (0,-1.5) ellipse (0.4cm and 0.4cm) ;
\end{tikzpicture}
&
\scalebox{0.8}{
\begin{tikzpicture}
\node(T1) at (1,0) {
  \scalebox{0.7}{
    \begin{tikzpicture}
      \node(N1) at (0,-1){1};
      \node(N2) at (0.5,-0.5){2};
      \node(N3) at (1,0){3};
      
      \draw (N1) -- (N2);
      \draw (N2) -- (N3);
    \end{tikzpicture}
  }
};
\node(T2) at (0,-2.2) {
  \scalebox{0.7}{
    \begin{tikzpicture}
      \node(N1) at (0,-0.5){1};
      \node(N2) at (0.5,0){2};
      \node(N3) at (1,-0.5){3};
      
      \draw (N1) -- (N2);
      \draw (N3) -- (N2);
    \end{tikzpicture}
  }
};

\node(T3) at (2,-1.5) {
  \scalebox{0.7}{
    \begin{tikzpicture}
      \node(N1) at (0,-0.5){1};
      \node(N2) at (0.5,-1){2};
      \node(N3) at (0.5,0){3};
      
      \draw (N1) -- (N3);
      \draw (N2) -- (N1);
    \end{tikzpicture}
  }
};

\node(T4) at (2,-3) {
  \scalebox{0.7}{
    \begin{tikzpicture}
      \node(N1) at (0,0){1};
      \node(N2) at (0,-1){2};
      \node(N3) at (0.5,-0.5){3};
      
      \draw (N3) -- (N1);
      \draw (N2) -- (N3);
    \end{tikzpicture}
  }
};

\node(T5) at (1,-4.5) {
  \scalebox{0.7}{
    \begin{tikzpicture}
      \node(N1) at (0,0){1};
      \node(N2) at (0.5,-0.5){2};
      \node(N3) at (1,-1){3};
      
      \draw (N3) -- (N2);
      \draw (N2) -- (N1);
    \end{tikzpicture}
  }
};

\draw (T1) -- (T2);
\draw (T1) -- (T3);
\draw (T3) -- (T4);
\draw (T4) -- (T5);
\draw (T2) -- (T5);

\end{tikzpicture}
}
\end{tabular}

\caption{On the left: a binary search tree and its corresponding sylvester class, and on the right: the sylvester classes of the permutohedron of size 3, with the corresponding binary search trees.}
\label{fig:bst-example}
\end{figure}
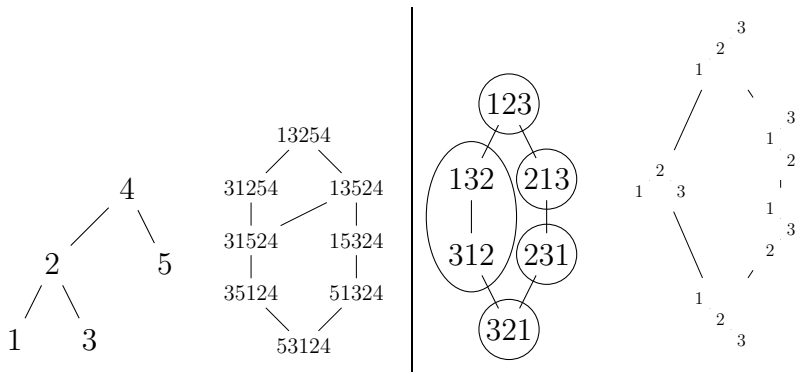

\subsection{Construction of interval-posets}
\label{sec:interval-posets}

We now introduce a more general object, called the \emph{interval-poset},
which is in bijection with intervals of the Tamari order. Let us first define two bijections between binary search trees and forests of planar trees. A binary search tree $T$ is a poset  containing two kinds of relations: when $a$ is in the left subtree of $b$, we have an increasing relation $a<b$ and $a \prec_T b$ and when $b$ is in the right subtree of $a$, we have a decreasing relation $b>a$ and $b \prec_T a$. The two bijections we define consist in keeping only increasing (resp. decreasing) relations of the poset.

\begin{Definition}
\label{def:LSRB-RSLB}

The \emph{increasing forest}\footnote{Note that what we call increasing means increasing labels from the leaf to the root and not from the root to the leaf as it is often the case.} (noted $\inc$) of a binary search tree $T$ is a forest poset on the nodes of $T$ containing  only increasing relations and such that:

\begin{equation}
a \prec_{\inc(T)} b \Leftrightarrow a < b ~\text{ and }~ a \prec_{T} b.
\end{equation}

It is equivalent to the following construction:

\begin{itemize}
  \item if a node labelled $x$ has a left son labelled $y$ in $T$
    then the node $x$ have a son $y$ in $F$;
  \item if a node labelled $x$ has a right son labelled $y$ in $T$
    then the node $x$ have a brother $y$ in $T'$.
\end{itemize}

In the same way, one can define the \emph{decreasing forest} (noted
$\dec$) by switching the roles of the right and left son in the previous
construction or, in terms of posets:

\begin{equation}
b \prec_{\dec(F)} a \Leftrightarrow b < a ~\text{ and }~ b \prec_{T} a.
\end{equation}

\end{Definition}

\begin{figure}[ht]

\begin{center}

\begin{tabular}{|c|c|c|}
\hline
tree $T$ & $\inc(T)$ & $\dec(T)$ \\
\hline
\scalebox{0.7}{
\begin{tikzpicture}
  \node(N1) at (-2,-1){1};
  \node(N2) at (-1.5,-3){2};
  \node(N3) at (-1,-2){3};
  \node(N4) at (-0.5,-3){4};
  \node(N5) at (0,0){5};
  \node(N6) at (1,-2){6};
  \node(N7) at (2,-1){7};
  \node(N8) at (2.5,-3){8};
  \node(N9) at (3,-2){9};
  \node(N10) at (3.5,-3){10};

  \draw (N1) -- (N5);
  \draw (N7) -- (N5);
  \draw (N3) -- (N1);
  \draw (N6) -- (N7);
  \draw (N9) -- (N7);
  \draw (N2) -- (N3);
  \draw (N4) -- (N3);
  \draw (N8) -- (N9);
  \draw (N10) -- (N9);
\end{tikzpicture}
}
&
\scalebox{0.7}{
\begin{tikzpicture}
  \node(N1) at (-2,-1){1};
  \node(N2) at (-1.5,-2){2};
  \node(N3) at (-1.5,-1){3};
  \node(N4) at (-1,-1){4};
  \node(N5) at (-1.5,0){5};
  \node(N6) at (-0.5,-1){6};
  \node(N7) at (-0.5,0){7};
  \node(N8) at (0.5,-1){8};
  \node(N9) at (0.5,0){9};
  \node(N10) at (1.5,0){10};

  \draw (N1) -- (N5);
  \draw (N3) -- (N5);
  \draw (N4) -- (N5);
  \draw (N6) -- (N7);
  \draw (N8) -- (N9);
  \draw (N2) -- (N3);
\end{tikzpicture}
}
&
\scalebox{0.7}{
\begin{tikzpicture}
  \node(N1) at (-1,0){1};
  \node(N2) at (-1.5,-1){2};
  \node(N3) at (-0.5,-1){3};
  \node(N4) at (-0.5,-2){4};
  \node(N5) at (1,0){5};
  \node(N6) at (0.5,-1){6};
  \node(N7) at (1.5,-1){7};
  \node(N8) at (1,-2){8};
  \node(N9) at (2,-2){9};
  \node(N10) at (2,-3){10};

  \draw (N2) -- (N1);
  \draw (N3) -- (N1);
  \draw (N6) -- (N5);
  \draw (N7) -- (N5);
  \draw (N4) -- (N3);
  \draw (N8) -- (N7);
  \draw (N9) -- (N7);
  \draw (N10) -- (N9);
\end{tikzpicture}
}
\\
\hline

\end{tabular}

\end{center}

\caption{A tree with its increasing and decreasing forests.}

\label{fig:lsrb-rslb}

\end{figure}
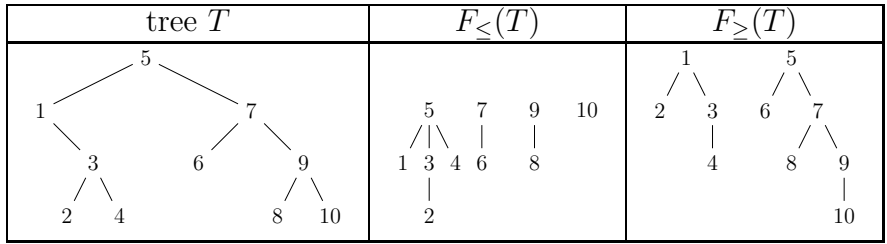

\noindent In Figure~\ref{fig:lsrb-rslb}, we can see a tree $T$
with its decreasing and increasing forests. The linear extensions of
the decreasing and increasing forests are actually initial and
final intervals of the weak order.

\begin{Proposition}

\label{prop:minmax-interval}
The linear extensions of the increasing forest of a tree $T$ is the
union of linear extensions of all trees lower than or equal to $T$
(initial interval) and the linear extensions of the decreasing forest
of $T$ is the union of all trees greater than or equal to $T$ (final
interval).

\end{Proposition}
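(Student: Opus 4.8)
The plan is to establish the statement for the increasing forest; the claim for $\dec$ then follows from the symmetry that exchanges left and right subtrees, which reverses the Tamari order and turns $\inc$ into $\dec$. For a permutation $\sigma$ write $\mathrm{BST}(\sigma)$ for its binary search tree, so that the sylvester class of a tree $T$ is $\{\sigma : \mathrm{BST}(\sigma)=T\}$, which is exactly the set of linear extensions of the poset $\prec_T$; let $\omega(T)$ denote its maximum for the right weak order. I would prove the proposition by showing that the set $\mathrm{LinExt}(\inc(T))$ of linear extensions of the increasing forest and the union $\bigcup_{T'\le T}\mathrm{SylvClass}(T')$ are both equal to one and the same weak-order interval, namely $[\,\mathrm{id},\,\omega(T)\,]$ with $\mathrm{id}=12\cdots n$.

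For the union over the order ideal $\{T'\le T\}$, I would invoke the fact recalled in the introduction that $\mathrm{BST}$ is an order-preserving surjection from the right weak order onto the Tamari order whose fibres are exactly the sylvester classes, each of which is a weak-order interval. Since $\mathrm{BST}$ is a lattice quotient, the top-of-fibre map $T'\mapsto\omega(T')$ is order preserving, and this gives $\{\sigma:\mathrm{BST}(\sigma)\le T\}=[\,\mathrm{id},\,\omega(T)\,]$: if $\sigma\le\omega(T)$ then $\mathrm{BST}(\sigma)\le\mathrm{BST}(\omega(T))=T$, and conversely any $\sigma$ with $\mathrm{BST}(\sigma)=T'\le T$ satisfies $\sigma\le\omega(T')\le\omega(T)$ because its class is the interval ending at $\omega(T')$. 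As the classes partition $\mathfrak{S}_n$, the left-hand union is precisely this interval.

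For the linear extensions I would argue by induction on the size, using the recursive shape of the increasing forest. If $T=r(L,R)$ with root label $r=|L|+1$, then reading off Definition \ref{def:LSRB-RSLB} one checks that $\inc(T)$ is the disjoint union of $\inc(R)$ (on the labels larger than $r$) with the poset obtained from $\inc(L)$ by adjoining $r$ as a new maximum, with no relation across the two blocks nor between $r$ and $R$. Consequently $\mathrm{LinExt}(\inc(T))$ is the set of shuffles of the words $u\cdot r$, for $u\in\mathrm{LinExt}(\inc(L))$, with the words $v\in\mathrm{LinExt}(\inc(R))$. By induction these two factors are the weak-order intervals $[\mathrm{id}_L,\omega(L)]$ and $[\mathrm{id}_R,\omega(R)]$, and I would show that the shuffle of two such intervals supported on consecutive blocks of values is again a weak-order interval, with minimum $\mathrm{id}$ and maximum the permutation realising all inversions between the two blocks together with those of $\omega(L)$ and $\omega(R)$. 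Identifying this maximum with $\omega(T)$ — whose inversion set consists of every pair $(b,a)$ with $a<b$ that is not forced increasing by $\prec_T$ — closes the induction and matches the interval of the previous paragraph.

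The main obstacle is this last step: proving that the set of all shuffles of two weak-order intervals is convex (an interval) in the weak order, and pinning down its top as $\omega(T)$. Concretely one must verify that the family of admissible inversion sets — the subsets of $\{(b,a):a<b,\ a\not\prec_{\inc(T)}b\}$ realised by a linear extension of $\inc(T)$ — has a maximum, which amounts to checking that this maximal candidate set is transitively closed. This is exactly where the forest structure of $\inc(T)$ is used, since for an arbitrary poset the linear extensions need not form a weak-order interval. Once convexity and the identification of $\omega(T)$ are established, the two paragraphs yield the same interval and the proof is complete, the decreasing case following after the left–right flip.
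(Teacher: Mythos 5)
Your overall strategy coincides with the paper's: reduce to the increasing case by the left--right symmetry, and show that the linear extensions of $\inc(T)$ and the union $\bigcup_{T'\le T}$ of sylvester classes are one and the same weak-order interval $[\mathrm{id},\omega(T)]$, where $\omega(T)$ denotes the maximal linear extension of $T$. The identification of the union with this interval via the lattice-quotient property of the sylvester congruence is fine and is essentially what the paper relies on. Where you genuinely diverge is on the linear extensions of $\inc(T)$: the paper's (sketched) proof argues directly with co-inversion sets --- a permutation is a linear extension of $\inc(T)$ exactly when its co-inversions avoid the pairs $(b,a)$ with $a<b$ and $a\prec_{\inc(T)}b$, so the set of linear extensions is automatically downward closed for the weak order, and all that remains is to check that the complement of these forced pairs is itself an inversion set and that the corresponding permutation is $\omega(T)$. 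You instead set up an induction on the size of $T$, writing the linear extensions of $\inc(T)$ as shuffles of those of $\inc(L)$ (each followed by the root label) with those of $\inc(R)$ --- a correct decomposition --- and then need the convexity of such a shuffle of two weak-order intervals. You name this convexity as the main obstacle and do not carry it out; moreover, your own reformulation of what must be checked (that the maximal candidate co-inversion set is realised by a permutation, i.e.\ is transitively closed) is precisely the paper's direct argument applied globally rather than inductively, so the induction and the shuffle machinery add an extra obligation without buying anything. Both your text and the paper's sketch leave this closure/identification step asserted rather than proved, so your proposal is at a comparable level of rigour, but the direct co-inversion route is the cleaner way to close the gap.
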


\begin{proof}[(sketch)]
We just need to recall that $\sigma \leq \mu$ in the weak order means that $coinv(\sigma) \subseteq coinv(\mu)$, where $coinv(\sigma) := \lbrace (\sigma(i), \sigma(j)); i<j, \sigma(i)>\sigma(j) \rbrace$ . It is then easy to see that the linear extension with maximal (resp. minimal) number of co-inversions is the same for $T$ than for $\inc$ (resp. $\dec$). Conversely, if the co-inversions of a permutation $\mu$ are included in the co-inversions of the maximal linear extension of a tree for the weak order, then $\mu$ is a linear extension of $\inc$. The same reasoning can be made for $\dec$.
\end{proof}

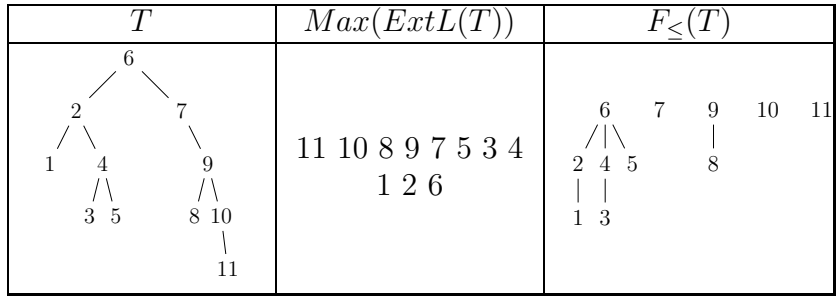
\begin{figure}[ht!]

\begin{center}

\begin{tabular}{|C|C|D|}
\hline
$T$
& 
$Max(ExtL(T))$
&
$\inc(T)$
\\
\hline

\scalebox{0.7}{

\begin{tikzpicture}
  \node(N1) at (-1.5,-2){1};
  \node(N2) at (-1,-1){2};
  \node(N3) at (-0.750,-3){3};
  \node(N4) at (-0.5,-2){4};
  \node(N5) at (-0.250,-3){5};
  \node(N6) at (0,0){6};
  \node(N7) at (1,-1){7};
  \node(N8) at (1.250,-3){8};
  \node(N9) at (1.5,-2){9};
  \node(N10) at (1.750,-3){10};
  \node(N11) at (1.875,-4){11};
  
  \draw (N2) -- (N6);
  \draw (N7) -- (N6);
  \draw (N1) -- (N2);
  \draw (N4) -- (N2);
  \draw (N9) -- (N7);
  \draw (N3) -- (N4);
  \draw (N5) -- (N4);
  \draw (N8) -- (N9);
  \draw (N10) -- (N9);
  \draw (N10) -- (N11);
\end{tikzpicture}
}
& 
11 10 8 9 7 5 3 4 1 2 6
&
\scalebox{0.72}{
\begin{tikzpicture}
  \node(N1) at (-2.5,-2){1};
  \node(N2) at (-2.5,-1){2};
  \node(N3) at (-2,-2){3};
  \node(N4) at (-2,-1){4};
  \node(N5) at (-1.5,-1){5};
  \node(N6) at (-2,0){6};
  \node(N7) at (-1,0){7};
  \node(N8) at (0,-1){8};
  \node(N9) at (0,0){9};
  \node(N10) at (1,0){10};
  \node(N11) at (2,0){11};
  
  \draw (N2) -- (N6);
  \draw (N4) -- (N6);
  \draw (N5) -- (N6);
  \draw (N8) -- (N9);
  \draw (N1) -- (N2);
  \draw (N3) -- (N4);
\end{tikzpicture}
}
\\
\hline
\end{tabular}

\end{center}

\caption{A tree with the maximum of its sylvester class and its
  increasing forest.}

\label{fig:max-sylv}

\end{figure}

An example of this construction can be found in
Figure~\ref{fig:lsrb-rslb} and another example of an increasing forest
is given in Figure~\ref{fig:max-sylv} with its maximal linear
extension. If two trees $T$ and $T'$ are such that $T \leq T'$, then $\dec(T)$
and $\inc(T')$ share some linear extensions (by
Proposition~\ref{prop:minmax-interval}). More precisely, we have $ExtL(\dec(T)) \cap ExtL(\inc(T')) = [Min(ExtL(T)),\allowbreak Max(ExtL(T'))]$. This set corresponds exactly to the linear extensions of the trees of the interval $[T,T']$ in the Tamari order. It is then natural to
construct a poset that would contain relations of both $\dec(T)$ and $\inc(T')$, see Figure \ref{fig:forest-intersection} for an example. We give a characterization of these posets.

\begin{Definition}
  \label{def:interval-poset-definition}
  An \emph{interval-poset} $P$ is a poset such that the following
  conditions hold:
  \begin{itemize}
    \item $a \prec_{P} c$ implies that for all $a < b < c$, we have
      $b \prec_{P} c$, 
    \item $c \prec_{P} a$ implies that for all $a < b < c$,
      we have $b \prec_{P} a$.
  \end{itemize}
\end{Definition}

\begin{figure}[ht]

\begin{center}
\scalebox{0.8}{
\begin{tabular}{|c|c|c|c|c|}
\hline
$T$ & $T'$ & $\dec(T)$ & $\inc(T')$ & $\dec(T) \cap \inc(T')$ \\
\hline
\begin{tikzpicture}
  \node(N1) at (-2,-2){1};
  \node(N2) at (-1,-3){2};
  \node(N3) at (-1,-1){3};
  \node(N4) at (0,0){4};
  
  \draw (N2) -- (N1);
  \draw (N1) -- (N3);
  \draw (N3) -- (N4);
\end{tikzpicture}
&
\begin{tikzpicture}
  \node(N1) at (0,0){1};
  \node(N2) at (0,-2){2};
  \node(N3) at (1,-1){3};
  \node(N4) at (2,-2){4};
  
  \draw (N3) -- (N1);
  \draw (N2) -- (N3);
  \draw (N4) -- (N3);
\end{tikzpicture}
&
\begin{tikzpicture}
  \node(N1) at (-1,0){1};
  \node(N2) at (-1,-1){2};
  \node(N3) at (0,0){3};
  \node(N4) at (1,0){4};
  
  \draw (N2) -- (N1);
\end{tikzpicture}
&
\begin{tikzpicture}
  \node(N1) at (-1,0){1};
  \node(N2) at (0,-1){2};
  \node(N3) at (0,0){3};
  \node(N4) at (1,0){4};
  
  \draw (N2) -- (N3);
\end{tikzpicture}
&
\begin{tikzpicture}
  \node(N1) at (-1,0){1};
  \node(N2) at (-0.5,-1){2};
  \node(N3) at (0,0){3};
  \node(N4) at (1,0){4};

  \draw (N2) -- (N1);
  \draw (N2) -- (N3);
\end{tikzpicture}
\\
\hline 
\end{tabular}
}
\end{center}

\caption{Two trees $T$ and $T'$, their decreasing and increasing
  forest and the interval-poset $[T, T']$. The linear extensions of the interval-poset correspond to the interval $[2134, 4231]$ of the weak order and $2134$ (resp. $4231$) is the minimal (resp. maximal) linear extension of $T$ (resp. $T'$).}

\label{fig:forest-intersection}

\end{figure}

\begin{Proposition}
  \label{prop:tamari-interval-characterization}
  The interval-posets are exactly the posets whose linear extensions
  correspond to Tamari intervals for the weak order.
\end{Proposition}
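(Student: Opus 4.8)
The plan is to prove the two inclusions separately, in both directions using the decomposition of a poset $P$ on $\{1,\dots,n\}$ into its \emph{increasing part} $P_{\le}$ (the relations $a \prec_P b$ with $a<b$) and its \emph{decreasing part} $P_{\ge}$ (the relations $b \prec_P a$ with $a<b$). The backbone of the argument is a lemma characterizing the two forest families: a poset carrying only increasing relations is of the form $\inc(T')$ for a unique binary tree $T'$ if and only if it satisfies the first condition of Definition \ref{def:interval-poset-definition}, and symmetrically a poset carrying only decreasing relations is some $\dec(T)$ if and only if it satisfies the second condition. To prove this lemma I would first observe that the first condition forces the set of elements above any fixed element to be a chain (if $a \prec_P c$ and $a \prec_P c'$ with, say, $c<c'$, then the condition applied to $a \prec_P c'$ gives $c \prec_P c'$), so $P_{\le}$ is a forest poset; recovering the binary tree then amounts to inverting the left-son/right-brother construction, and the condition is exactly what guarantees the labels assemble into a binary search tree.

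For the direct inclusion, let $P$ be an interval-poset. Both $P_{\le}$ and $P_{\ge}$ are transitively closed (two composable increasing relations compose into an increasing one, and likewise for decreasing), so each is a subposet satisfying the relevant condition; by the lemma, $P_{\le}=\inc(T')$ and $P_{\ge}=\dec(T)$ for unique trees $T,T'$. Since every relation of $P$ is either increasing or decreasing, $P$ is exactly the union of these two relation sets, whence $ExtL(P)=ExtL(\inc(T'))\cap ExtL(\dec(T))$. This intersection is nonempty because every finite poset admits a linear extension, and a permutation lying in it would belong to the sylvester class of a single tree $S$ with both $S\le T'$ and $S\ge T$; as the sylvester classes partition $\mathfrak{S}_n$, this forces $T\le T'$. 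Proposition \ref{prop:minmax-interval} and the discussion preceding Definition \ref{def:interval-poset-definition} then identify $ExtL(P)$ with the weak-order interval $[Min(ExtL(T)),Max(ExtL(T'))]$, i.e. with the Tamari interval $[T,T']$.

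For the converse, suppose $ExtL(P)$ is a Tamari interval $[T,T']$. Because a finite poset is the intersection of its linear extensions, the equality $ExtL(P)=ExtL(\dec(T))\cap ExtL(\inc(T'))$ from the same discussion forces $P$ to coincide with the transitive closure of the union $\dec(T)\cup\inc(T')$, so it remains to check that this poset satisfies the two conditions. The subtlety, and what I expect to be the main obstacle, is that the transitive closure can a priori create new relations by chaining an increasing step of $\inc(T')$ with a decreasing step of $\dec(T)$, and one must show such relations never violate the local conditions. The clean way around this is to translate everything into coinversions: writing $\mu_1=Max(ExtL(P))=Max(ExtL(\inc(T')))$, the relation $a\prec_P c$ with $a<c$ holds if and only if $a$ precedes $c$ in every extension, i.e. if and only if $(c,a)\notin coinv(\mu_1)$, and the coinversion description of the maximal extension of an increasing forest (the heart of the proof of Proposition \ref{prop:minmax-interval}) gives $(c,a)\notin coinv(\mu_1)\Leftrightarrow a\prec_{\inc(T')}c$. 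The first condition for $P$ thereby reduces verbatim to the first condition for $\inc(T')$, which holds by the lemma; a symmetric argument with $\mu_0=Min(ExtL(P))=Min(ExtL(\dec(T)))$ handles the second condition via $\dec(T)$. This reduction is where I would concentrate the effort, since it both bypasses the transitive-closure bookkeeping and pins down exactly why the hypothesis $T\le T'$ is used (it is what makes $Max(ExtL(P))$ equal to $Max(ExtL(\inc(T')))$).
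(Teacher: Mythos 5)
Your proof is correct and follows essentially the same route as the paper, which only sketches the argument: decompose the poset into its increasing and decreasing relations and observe that the two conditions of Definition \ref{def:interval-poset-definition} are exactly what is needed for these parts to be the forests $\inc(T')$ and $\dec(T)$ of binary search trees. Your write-up simply supplies the details the paper leaves implicit, in particular the coinversion argument in the converse direction showing that the transitive closure of $\dec(T)\cup\inc(T')$ creates no relations beyond those of the two forests.
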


Indeed, it is easy to see that from an interval-poset, one can build
$F_\leq$ (resp. $F_\geq$) by only considering the increasing relations
(resp. decreasing relations). Conditions of Definition
\ref{def:interval-poset-definition} are necessary and sufficient to
obtain well-defined increasing (resp. decreasing) forests that 
correspond to proper binary search trees.

\subsection{Combinatorial properties of interval-posets}
\label{sec:comb-prop}

Many operations on intervals can be easily done on interval-posets, all with trivial proofs. 

\begin{Proposition}
\label{prop:comb-prop-trivial}
\begin{enumerate}[label=(\roman{*}), ref=(\roman{*})]
\item The intersection between two intervals $I_1$ and $I_2$ is given by the interval-poset $I_3$ containing all relations of $I_1$ and $I_2$. If $I_3$ is a valid poset, then it is a valid interval-poset, otherwise the intersection is empty. 
\label{prop:comb-prop-intersect}
\item An interval $I_1 := \left[ T_1, T_1' \right] $ is contained into an interval $I_2 := \left[ T_2, T_2' \right]$, \emph{i.e.}, $T_1 \geq T_2$ and $T_1' \leq T_2'$, if and only if all relations of the interval-poset $I_1$ are satisfied by the interval-poset $I_2$. 
\label{prop:comb-prop-inclusion}
\item If $I_1 := \left[ T_1, T_1' \right]$ is an interval, then $I_2 = \left[ T_2, T_1' \right]$, $T_2 \geq T_1$, if and only if all relations of the interval-poset $I_1$ are satisfied by $I_2$ and all new relations of $I_2$ are decreasing. Symmetrically, $I_3 = \left[ T_1, T_3 \right]$, $T_3 \leq T_1'$, if and only if all relations of the interval-poset $I_1$ are satisfied by $I_3$ and all new relations of $I_3$ are increasing. 
\label{prop:comb-prop-minmax-inclusion}
\end{enumerate}
\end{Proposition}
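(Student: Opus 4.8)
The plan is to reduce all three parts to one elementary fact about posets: a finite poset on $\{1,\dots,n\}$ is the intersection of its linear extensions, so that a permutation lies in $ExtL(P)$ exactly when it respects every relation of $P$. First I would record the resulting dictionary. On one hand, adding relations can only remove linear extensions, and $P=\bigcap_{L\in ExtL(P)}L$ gives the converse, so for two posets the inclusion of relation sets is reversed by $ExtL$; that is, every relation of $P$ holds in $Q$ iff $ExtL(Q)\subseteq ExtL(P)$. On the other hand, since the sylvester classes partition $\mathfrak{S}_n$ into nonempty blocks, inclusion of Tamari intervals (viewed as sets of trees) corresponds to inclusion of their unions of sylvester classes, i.e. of the associated sets of permutations. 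Composing these with the bijection of Proposition \ref{prop:tamari-interval-characterization} ($P$ is an interval-poset iff $ExtL(P)$ is a Tamari interval) turns every statement about intervals into a statement about relation sets, which is why the proofs are short.

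For \ref{prop:comb-prop-intersect}, I would take $I_3$ to be the transitive closure of all relations of $I_1$ together with all relations of $I_2$. A permutation respects these relations iff it is a linear extension of both posets, so $ExtL(I_3)=ExtL(I_1)\cap ExtL(I_2)$; by the dictionary this is the set of permutations whose tree lies in both Tamari intervals, hence in their intersection. As the Tamari order is a lattice, this intersection of intervals is again an interval or is empty. If the union of relations is acyclic then $I_3$ is a genuine poset, so it has at least one linear extension, $ExtL(I_3)$ is a nonempty Tamari interval, and the reverse direction of Proposition \ref{prop:tamari-interval-characterization} certifies that $I_3$ is the interval-poset of $I_1\cap I_2$. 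If instead the union of relations contains a cycle, $I_3$ has no linear extension and the intersection is empty; thus failure of antisymmetry is exactly the signature of an empty intersection.

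For \ref{prop:comb-prop-inclusion}, the containment $I_1\subseteq I_2$ of intervals is equivalent to $ExtL(I_1)\subseteq ExtL(I_2)$, which by the first paragraph is equivalent to the reverse inclusion of relations: every relation of the larger interval holds in the smaller one. For \ref{prop:comb-prop-minmax-inclusion} I would combine this with the decomposition of an interval-poset into its increasing part $\inc(T')$ and its decreasing part $\dec(T)$, which is recovered by keeping respectively the increasing and the decreasing relations. Raising only the lower endpoint from $T_1$ to $T_2\geq T_1$ leaves $\inc(T_1')$ unchanged and, by Proposition \ref{prop:minmax-interval}, enlarges the decreasing part since $T_1\leq T_2$ forces $\dec(T_1)\subseteq\dec(T_2)$; hence $I_2\subseteq I_1$, all relations of $I_1$ persist, and the added relations are exactly decreasing ones. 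Conversely, if all new relations are decreasing then the increasing part, and therefore the upper endpoint, is unchanged, while the extra decreasing relations push the lower endpoint up; the statement for fixing the upper endpoint follows by exchanging the roles of $\inc$ and $\dec$.

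The only points needing genuine care, and the place I expect to be the main obstacle, are keeping track of the inclusion-reversal — the smaller interval is the one carrying more relations — and handling the empty case in \ref{prop:comb-prop-intersect} through antisymmetry rather than by checking the local conditions of Definition \ref{def:interval-poset-definition} directly. Once the inclusion-reversing dictionary is in place, each equivalence is a one-line translation.
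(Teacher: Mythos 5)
Your argument is correct and, structurally, it is the intended one: the paper gives no proof at all for this proposition (it only declares all three parts to have ``trivial proofs''), and the remark following Proposition~\ref{prop:tamari-interval-characterization} --- that $\inc$ and $\dec$ are recovered from an interval-poset by keeping the increasing, respectively decreasing, relations --- is exactly the dictionary you set up. Your two reductions (a finite poset is the intersection of its linear extensions, so inclusion of relation sets is anti-equivalent to inclusion of sets of linear extensions; and inclusion of Tamari intervals is inclusion of the corresponding unions of sylvester classes, these being nonempty blocks of a partition of $\mathfrak{S}_n$) are the right elementary facts, and your treatment of the empty case in~\ref{prop:comb-prop-intersect} via acyclicity is clean: an acyclic union of relations admits a linear extension, which exhibits a tree lying in both intervals, while a cycle kills all extensions.

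One point you should make explicit: in part~\ref{prop:comb-prop-inclusion} you conclude that $\left[T_1,T_1'\right]\subseteq\left[T_2,T_2'\right]$ is equivalent to every relation of the \emph{larger} interval-poset $I_2$ holding in the \emph{smaller} one $I_1$. That is the mathematically correct direction: it is forced by your own inclusion-reversing dictionary, and it is the direction consistent with part~\ref{prop:comb-prop-minmax-inclusion}, where the sub-interval $I_2=\left[T_2,T_1'\right]$ is the one acquiring \emph{new} relations. The printed statement of~\ref{prop:comb-prop-inclusion} reads the other way (``all relations of $I_1$ are satisfied by $I_2$''); taken literally it already fails for $n=2$ with $I_1=[T,T]$ the maximal tree (whose interval-poset has one decreasing relation) and $I_2$ the whole lattice (whose interval-poset has none). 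So you have proved the corrected statement rather than the literal one; this is a defect of the proposition as printed, not of your proof, but you should say which inclusion you establish rather than leaving it as a ``point needing care.''
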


\section{Tamari polynomials}
\label{sec:tamari-polynomials}

\subsection{Bilinear form and enumeration}
\label{sec:bilinear-form}

Let $\phi(y)$ be the generating function of Tamari intervals,

\begin{equation}
\label{eq:generating-function}
\phi(y) = 1 + y + 3 y^2 + 13 y^3 + 68 y^4 + \dots~.
\end{equation}
where $y$ counts the number of nodes in the trees or equivalently the number of vertices in the interval-posets. In \cite{Chap}, Chapoton gives a refined version of $\phi$ with a parameter $x$ that counts the number of nodes on the left border of the smaller tree of the interval,

\begin{equation}
\label{eq:refined-generating-function}
\Phi(x,y) = 1 + xy + (x + 2x^2)y^2 + (3x + 5x^2 + 5x^3)y^3 + \dots~.
\end{equation}

We know that an interval-poset $I$ of $[T,T']$ is formed by two forest posets of respectively decreasing relations of $T$ and increasing relations of $T'$. The number of nodes in the left border of $T$ can then be seen as the number of trees in $\dec(T)$, \emph{i.e.}, the poset formed by the decreasing relations of $I$. This way, one can interpret the refined generating function \eqref{eq:refined-generating-function} directly on interval-posets. In \cite[formula (6)]{Chap}, Chapoton gives a functional equation on $\Phi$:\footnote{Our equation is slightly different from the one of \cite[formula (6)]{Chap}. Indeed, the definition of the degree of $x$ differs by one and in our case $\Phi$ also counts the interval of size $0$.}

\begin{equation}
\label{eq:functional-equation}
\Phi(x,y) = xy\Phi(x,y)\frac{x\Phi(x,y) - \Phi(1,y)}{x-1} +1.
\end{equation}

The generating function $\Phi$ is then the solution of 
\begin{equation}
\label{eq:bilinear-equation}
\Phi = B(\Phi,\Phi) +1
\end{equation}
where $B$ is the bilinear form 
\begin{equation}
\label{eq:bilinear}
B(f,g) = xyf(x,y)\frac{xg(x,y) - g(1,y)}{x - 1}.
\end{equation}
By developing \eqref{eq:bilinear-equation}, one then obtains that 
\begin{align}
\Phi &= 1 + B(1,1) + B(B(1,1),1) + B(1,B(1,1)) + \dots \\
&= \sum_T \mathcal{B}_T,
\end{align}
summing over all binary trees $T$, with $\mathcal{B}_T$ being recursively defined by $\mathcal{B}_\emptyset := 1$ and $\mathcal{B}_T := B(\mathcal{B}_L, \mathcal{B}_R)$ where $L$ and $R$ are respectively the left and right children of $T$. By a combinatorial interpretation of $B$, we actually prove that $\mathcal{B}_T$ counts the number of trees smaller than $T$ in the Tamari order. We also obtain a new way of generating intervals and thus prove in a new way that the generating function of the interval satisfies the functional equation \eqref{eq:functional-equation}. Let us define a new operation on interval-posets:

\begin{Definition}
\label{def:composition}
Let $I_1$ and $I_2$ be two interval-posets of size respectively $k_1$ and $k_2$. Then $ \mathbb{B}(I_1, I_2)$ is the formal sum of all interval-posets of size $k_1 + k_2 + 1$ where, 
\begin{enumerate}[label=(\roman{*}), ref=(\roman{*})]
\item the relations between vertices $1, \dots, k_1$ are exactly the ones from $I_1$,
\label{def:composition:cond:I1}
\item the relations between $k_1 + 2, \dots, k_1 + k_2 + 1$ are
  exactly the ones from $I_2$ shifted by $k_1 + 1$,
\label{def:composition:cond:I2}
\item we have $i \prec k_1 +1$ for all $i \leq k_1$,
\label{def:composition:cond:incr}
\item there is no relation $k_1+1 \prec j$ for all $j>k_1+1$.
\label{def:composition:cond:decr}
\end{enumerate}
We call this operation the composition of intervals and extend it by bilinearity to all linear sums of intervals.
\end{Definition}

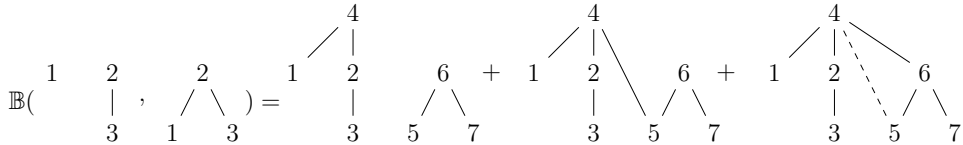
\begin{figure}[ht]
\centering
\scalebox{0.8}{
\begin{tikzpicture}
\node(B) at (0,-0.5){$ \mathbb{B}($};

\begin{scope}[xshift=0.5cm]
\node(T11) at (0,0){1};
\node(T12) at (1,0){2};
\node(T13) at (1,-1){3};
\draw(T13) -- (T12);
\end{scope}

\node(times) at (2,-0.5){$,$};

\begin{scope}[xshift=2.5cm]
\node(T22) at (0.5,0){2};
\node(T21) at (0,-1){1};
\node(T23) at (1,-1) {3};
\draw(T21) -- (T22);
\draw(T23) -- (T22);
\end{scope}

\node(equal) at (4,-0.5){$) =$};

\begin{scope}[xshift=4.5cm]
\node(T1) at (0,0){1};
\node(T2) at (1,0){2};
\node(T3) at (1,-1){3};
\node(T4) at (1,1){4};
\node(T5) at (2,-1){5};
\node(T6) at (2.5,0){6};
\node(T7) at (3,-1){7};
\draw (T1) -- (T4);
\draw (T2) -- (T4);
\draw (T3) -- (T2);
\draw (T5) -- (T6);
\draw (T7) -- (T6);
\end{scope}

\node(plus1) at (7.8,0){$+$};

\begin{scope}[xshift=8.5cm]
\node(T1) at (0,0){1};
\node(T2) at (1,0){2};
\node(T3) at (1,-1){3};
\node(T4) at (1,1){4};
\node(T5) at (2,-1){5};
\node(T6) at (2.5,0){6};
\node(T7) at (3,-1){7};
\draw (T1) -- (T4);
\draw (T2) -- (T4);
\draw (T3) -- (T2);
\draw (T5) -- (T6);
\draw (T5) -- (T4);
\draw (T7) -- (T6);
\end{scope}

\node(plus2) at (11.7,0){$+$};

\begin{scope}[xshift=12.5cm]
\node(T1) at (0,0){1};
\node(T2) at (1,0){2};
\node(T3) at (1,-1){3};
\node(T4) at (1,1){4};
\node(T5) at (2,-1){5};
\node(T6) at (2.5,0){6};
\node(T7) at (3,-1){7};
\draw (T1) -- (T4);
\draw (T2) -- (T4);
\draw (T3) -- (T2);
\draw (T5) -- (T6);
\draw[dashed] (T5) -- (T4);
\draw (T6) -- (T4);
\draw (T7) -- (T6);
\end{scope}
\end{tikzpicture}
}
\caption{Composition of interval-posets: the three terms of the sum are obtained by adding respectively no, 1, and 2 decreasing relations between the second poset and the vertex $4$. For the last term, two decreasing relations have been added: $5 \prec 4$ and $6 \prec 4$, the $5\prec4$ relation has been dashed as it is implicit through transitivity.}
\label{fig:composition}
\end{figure}

The sum we obtain by composing interval-posets actually corresponds to all possible ways of adding decreasing relations between the second poset and the new vertex $k_1 + 1$, as seen on Figure \ref{fig:composition}. Especially, there is no relations between vertices $1, \dots, k_1$ and $k_1+2, \dots, k_1+k_2+1$. Indeed, condition \ref{def:composition:cond:incr} makes it impossible to have any relation $j \prec i$ with $i<k_1+1<j$ as this would imply by Definition \ref{def:interval-poset-definition} that $k_1+1 \prec i$. And condition \ref{def:composition:cond:decr} makes it impossible to have $i \prec j$ as this would imply $k_1 + 1 \prec j$.

\begin{Proposition}
\label{prop:combinatorial-equivalence-composition}
Let $I_1$ and $I_2$ be two interval-posets. Let $\mathcal{P}$ be the linear function that associates with an interval-poset its monomial $x^{trees}y^{size}$ where the power of $y$ is the number of vertices and the power of $x$ the number of trees obtained by keeping only decreasing relations. Then 

\begin{equation}
\mathcal{P}(\mathbb{B}( I_1,  I_2)) = B(\mathcal{P}(I_1), \mathcal{P}(I_2)).
\end{equation}

\end{Proposition}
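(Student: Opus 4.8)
The plan is to exploit the bilinearity of both $\mathbb{B}$ and $B$ together with the linearity of $\mathcal{P}$ to reduce the claim to the case where $I_1$ and $I_2$ are single interval-posets. Writing $\mathcal{P}(I_1) = x^{t_1}y^{k_1}$ and $\mathcal{P}(I_2) = x^{t_2}y^{k_2}$, where $k_i$ is the size of $I_i$ and $t_i$ is the number of trees of its decreasing forest, the proof then amounts to computing both sides of the asserted identity as explicit polynomials in $x$ and $y$ and checking that they agree.

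First I would evaluate the right-hand side directly. Substituting the two monomials into the bilinear form \eqref{eq:bilinear} and simplifying the geometric quotient gives
\[
B(x^{t_1}y^{k_1}, x^{t_2}y^{k_2}) = x^{t_1+1}y^{k_1+k_2+1}\,\frac{x^{t_2+1}-1}{x-1} = \sum_{j=0}^{t_2} x^{t_1+1+j}\, y^{k_1+k_2+1}.
\]
Thus the right-hand side is a sum of $t_2+1$ monomials, all of $y$-degree $k_1+k_2+1$, whose $x$-degrees run through the consecutive integers from $t_1+1$ to $t_1+t_2+1$.

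Next I would read off $\mathcal{P}$ on each term of $\mathbb{B}(I_1, I_2)$. By Definition \ref{def:composition} every term has $k_1+k_2+1$ vertices, contributing the factor $y^{k_1+k_2+1}$; it remains to count the trees of each decreasing forest. Here I would use the structural observation that, by the second axiom of Definition \ref{def:interval-poset-definition}, every tree of a decreasing forest occupies a contiguous block of labels, so the (shifted) decreasing forest of $I_2$ splits the labels $k_1+2,\dots,k_1+k_2+1$ into $t_2$ consecutive intervals whose minima are its roots. Since the relations inside each block are untouched and, as explained after Definition \ref{def:composition}, no relation crosses between the two blocks while those toward $k_1+1$ coming from block $1$ are increasing, block $1$ always contributes exactly $t_1$ trees. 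The only freedom lies in the decreasing relations directed toward the new vertex $k_1+1$: the second interval-poset axiom forces the set of $j>k_1+1$ with $j\prec k_1+1$ to be an initial segment of the block-$2$ labels, and transitivity makes it downward closed in the block-$2$ forest, so by contiguity it is exactly a union of the first $m$ whole trees of that forest. Hence the terms of $\mathbb{B}(I_1, I_2)$ are in bijection with $m\in\{0,1,\dots,t_2\}$, and attaching the first $m$ roots merges $m$ block-$2$ trees into the tree of $k_1+1$, leaving $t_1 + 1 + (t_2-m)$ trees. This yields
\[
\mathcal{P}(\mathbb{B}(I_1, I_2)) = \sum_{m=0}^{t_2} x^{t_1+t_2+1-m}\, y^{k_1+k_2+1},
\]
which matches the right-hand side after the reindexing $j = t_2-m$.

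The routine part is the algebraic evaluation of $B$; the main obstacle is the structural step, namely proving that the admissible families of decreasing relations toward $k_1+1$ correspond exactly to initial segments of the $t_2$ roots and that attaching $m$ of them lowers the tree count by precisely $m$. This is where the contiguity of decreasing-forest trees and the two axioms of Definition \ref{def:interval-poset-definition} do the real work, and I would isolate the contiguity property as a short preliminary lemma before assembling the final computation.
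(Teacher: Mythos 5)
Your proposal is correct and follows essentially the same route as the paper's proof: reduce to monomials, evaluate the bilinear form $B$ explicitly, and show that the admissible sets of decreasing relations toward the new vertex $k_1+1$ are exactly the initial segments of the $t_2$ roots of the decreasing forest of $I_2$, so that $x^{t_2}$ is replaced by $x+x^2+\dots+x^{t_2+1}$. Your treatment of the structural step (contiguity of the trees and downward closure under the poset relations) is somewhat more explicit than the paper's, but it is the same argument.
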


As an example, in Figure \ref{fig:composition}, $\mathcal{P}(I_1) = \mathcal{P}(I_2) = x^2 y^3$. And we have $ \mathcal{P}(\mathbb{B}( I_1, I_2)) = x^5y^7 + x^4y^7 + x^3y^7 = B(x^2y^3, x^2y^3)$.

\begin{proof}
If $I_1$ and $I_2$ are two interval-posets of size respectively $k_1$ and $k_2$, we have by definition that all interval-posets of $\mathbb{B}(I_1, I_2)$ are of size $k_1 + k_2 + 1$. Thus the power of $y$ is the same in $B(\mathcal{P}(I_1), \mathcal{P}(I_2))$ and in $\mathcal{P}( \mathbb{B}( I_1, I_2))$ and we only have to consider the polynomial in $x$.

Let us assume that $I_1$ and $I_2$ contain respectively $n$ and $m$ trees formed by decreasing relations. The $n$ trees of $I_1$ are kept unchanged on all terms of the result as no decreasing relation is added to the vertices $1, \dots, k_1$. Now, we call $v_1 < \dots < v_m$ the root vertices of the trees of $I_2$ shifted by $k_1 + 1$. By construction, $k_1+1 < v_1$, and this new vertex can either become a new root or a root to some of the previous trees. If we have $v_j \prec k_1 + 1$, by definition of an interval-poset, we also have $v_i \prec k_1 + 1$ for all $i<j$. The $m$ trees of $I_2$ can then be replaced by either $m+1,m,\dots,2,$ or $1$ trees, which mean the monomial $x^m$ of $\mathcal{P}(I_2)$ becomes $x+x^2 + \dots + x^{m+1}$ in the composition. So,

\begin{align}
\mathcal{P}( \mathbb{B}( I_1, I_2)) &= y(x^n y^{k_1})y^{k_2}x \frac{x^{m+1} - 1}{x- 1} \\
&= B( \mathcal{P}(I_1), \mathcal{P}(I_2)).
\end{align}

\end{proof}

To prove now that the generating function of the intervals is the solution of the bilinear equation \eqref{eq:bilinear-equation}, we only need the following proposition.

\begin{Proposition}
Let $I$ be an interval-poset, then, there is exactly one pair of intervals $I_1$ and $I_2$ such that $I$ appears in the composition $ \mathbb{B}( I_1, I_2)$. 
\end{Proposition}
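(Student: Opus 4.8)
The plan is to reduce the statement to the existence and uniqueness of a single distinguished vertex of $I$, which I will call the \emph{pivot}, that must play the role of $k_1+1$ in any composition producing $I$.

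First I would record exactly which interval-posets occur in a composition. By Definition~\ref{def:composition}, a poset $I$ on $\{1,\dots,n\}$ appears in $\mathbb{B}(I_1,I_2)$ with $k_1 = \lvert I_1\rvert$ precisely when, writing $r := k_1+1$, we have (A) $i \prec_I r$ for every $i < r$ and (B) there is no $j > r$ with $r \prec_I j$, and moreover $I_1$ (resp.\ $I_2$) is the restriction of $I$ to $\{1,\dots,r-1\}$ (resp.\ to $\{r+1,\dots,n\}$, relabelled). Indeed (A) is condition~\ref{def:composition:cond:incr}, (B) is condition~\ref{def:composition:cond:decr}, while conditions~\ref{def:composition:cond:I1} and~\ref{def:composition:cond:I2} merely say that $I_1,I_2$ are these restrictions; the absence of cross-relations between $\{1,\dots,r-1\}$ and $\{r+1,\dots,n\}$ is automatic, being exactly the computation already given after Definition~\ref{def:composition}, which uses only (A), (B) and the two axioms of Definition~\ref{def:interval-poset-definition}. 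Restrictions of an interval-poset to a contiguous block of labels are again interval-posets, the betweenness axioms being inherited. Thus a witnessing pair $(I_1,I_2)$ is completely determined by the choice of a vertex $r$ satisfying (A) and (B); call such an $r$ a \emph{pivot}. The Proposition therefore reduces to the claim that every interval-poset has exactly one pivot.

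For uniqueness I would argue directly. A pivot is necessarily a root of the increasing forest $\inc(I)$ by (B). If $r < r'$ were two pivots, then applying (A) at $r'$ to the index $r < r'$ gives $r \prec_I r'$, an increasing relation out of $r$ to a larger vertex, contradicting (B) at $r$. Hence there is at most one pivot.

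For existence I would take $r$ to be the smallest root of $\inc(I)$, that is, the smallest vertex with no increasing relation to a larger one; it exists since the forest is finite and nonempty, and it satisfies (B) by construction. The crux, and the step I expect to be the main obstacle, is verifying (A): that every $i < r$ satisfies $i \prec_I r$. Here I would use the first axiom of Definition~\ref{def:interval-poset-definition} to show that $\{\,b < r : b \prec_I r\,\}$ is a final segment $\{m,\dots,r-1\}$ of $\{1,\dots,r-1\}$, and then rule out $m \geq 2$: if $v := m-1$ were not below $r$, then $v$ is not a root (as $v < r$ and $r$ is the smallest root), so taking $\rho$ to be the largest vertex with $v \prec_I \rho$, one checks that $\rho$ is a root with $\rho \neq r$, hence $\rho > r$; but then $v < r < \rho$ together with $v \prec_I \rho$ forces $r \prec_I \rho$ by the first axiom, contradicting that $r$ is a root. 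The delicate point is that this argument must track transitive relations of the poset, not merely covering relations. Combining existence and uniqueness of the pivot with the reduction above yields the Proposition.
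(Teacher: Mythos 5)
Your proof is correct, and it follows the same global strategy as the paper: characterize the interval-posets appearing in $\mathbb{B}(I_1,I_2)$ by the single vertex $r=k_1+1$ satisfying your conditions (A) and (B), observe that $(I_1,I_2)$ is then forced to be the pair of restrictions, and reduce everything to the existence and uniqueness of such a pivot. Your uniqueness argument (two pivots $r<r'$ give $r\prec r'$ by (A) at $r'$, contradicting (B) at $r$) is essentially the paper's, stated more explicitly than its terse ``this is exactly the definition of $k$.'' Where you genuinely diverge is in the existence step, and the two arguments are dual: the paper takes $k$ \emph{maximal} satisfying (A) (which always holds for the vertex $1$) and derives (B) by a short contradiction --- if $k\prec j$ for some $j>k$, then Definition~\ref{def:interval-poset-definition} and transitivity force $i\prec j$ for all $i<j$, contradicting maximality of $k$; you instead take $r$ \emph{minimal} satisfying (B) (the smallest root of $\inc$) and derive (A), which requires your final-segment analysis and the auxiliary maximal element $\rho$ above $v$. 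Both are sound, but the paper's choice of extremal vertex yields a noticeably shorter verification of the remaining condition, whereas yours has the mild advantage of starting from a vertex that is visibly a root of the increasing forest, which matches the recursive picture of Proposition~\ref{prop:sum-composition}. There is no gap in your argument; the only point worth making explicit is that the largest $\rho$ with $v\prec_I\rho$ is indeed greater than $v$ (guaranteed because $v$ is not a root), which you implicitly use when concluding $\rho>r$.
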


\begin{proof}
Let $I$ be an interval-poset of size $n$ and let $k$ be the vertex of $I$ with maximal label such that $i \prec k$ for all $i < k$. The vertex $1$ satisfies this property, so one can always find such a vertex. We prove that $I$ only appears in the composition of $I_1$ by $I_2$, where $I_1$ is formed by the vertices and relations of $1, \dots, k -1$ and $I_2$ is formed by the re-normalized vertices and relations of $k + 1, \dots, n$. Note that one or both of these intervals can be of size 0.

Conditions \ref{def:composition:cond:I1}, \ref{def:composition:cond:I2}, and \ref{def:composition:cond:incr} of Definition \ref{def:composition} are clearly satisfied by construction. If condition \ref{def:composition:cond:decr} is not satisfied, it means that we have a relation $k \prec j$ with $j>k$. Then, by definition of an interval-poset, we also have $\ell \prec j$ for all $k<l<j$ and by definition of $k$, we have $i\prec k \prec j$ for all $i<k$, so for all $i<j$, we have $i \prec j$. This is not possible as $k$ has been chosen to be maximal among vertices with this property. 

This proves that $I$ appears in the composition of $I_1$ by $I_2$. Now, if $I$ appears in $\mathbb{B}(I_1',I_2')$, the vertex $k' = \vert I_1 ' \vert + 1$ is by definition the vertex where for all $i<k'$, we have $i \prec k'$ and for all $j>k'$, we have $k' \nprec j$, this is exactly the definition of $k$. So $k' = k$ which makes $I_1' = I_1$ and $I_2' = I_2$.
\end{proof}

\subsection{Main result}
\label{sec:main-result}

This composition operation on intervals is an analogue of the usual composition of binary trees that adds a root node to two given binary trees. In our case, a tree $T$ is replaced by a sum of intervals $[T',T]$.

\begin{Proposition}
\label{prop:sum-composition}
Let $T := k(T_1,T_2)$ be a binary tree and $S := \sum_{T' \leq T} [T',T]$. Then, if $S_1 := \sum_{T_1' \leq T_1} [T_1',T_1]$ and $S_2 := \sum_{T_2' \leq T_2} [T_2',T_2]$, we have $S = \mathbb{B}(S_1,S_2)$.
\end{Proposition}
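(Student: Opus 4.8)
The plan is to prove the identity $S = \mathbb{B}(S_1, S_2)$ by showing that both sides are sums of exactly the same collection of interval-posets, using the composition-decomposition machinery established in the two preceding propositions. The key structural fact is that $T = k(T_1, T_2)$ means the binary search tree $T$ has its root labelled $k = |T_1| + 1$, with $T_1$ occupying labels $1, \dots, k-1$ and $T_2$ occupying labels $k+1, \dots, n$. Since the Tamari order interacts cleanly with this recursive structure, I would first make precise which interval-posets appear on each side and then match them.

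**First I would** unpack the left-hand side. By Proposition~\ref{prop:minmax-interval} and the discussion preceding Definition~\ref{def:interval-poset-definition}, each interval $[T', T]$ with $T' \le T$ corresponds to the interval-poset $\dec(T') \cap \inc(T)$, i.e.\ the poset carrying all increasing relations of $\inc(T)$ together with a valid choice of decreasing relations coming from $\dec(T')$. Because $T$ has root $k$ with $i \prec_T k$ for every $i < k$ (every node of the left subtree precedes the root via an increasing relation, and these are exactly the increasing relations into $k$), the vertex $k$ in every such interval-poset satisfies $i \prec k$ for all $i < k$ and $k \nprec j$ for all $j > k$. Conversely, the decomposition used in the proof of the previous proposition shows $k$ is precisely the distinguished splitting vertex. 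So every term $[T', T]$ with $T' \le T$ is exactly an interval-poset in which $k$ is the splitting vertex, and cutting at $k$ yields an interval-poset $I_1$ on $\{1, \dots, k-1\}$ and $I_2$ on $\{k+1, \dots, n\}$.

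**The main work** is to identify $I_1$ and $I_2$ as terms of $S_1$ and $S_2$, and to check that the choice of decreasing relations into $k$ ranges over exactly the options produced by $\mathbb{B}$. Restricting the relations of $[T', T]$ to $\{1, \dots, k-1\}$, the increasing relations restrict to $\inc(T_1)$ (the left subtree's increasing forest) and the decreasing relations to those of some $T_1' \le T_1$, giving an interval-poset $[T_1', T_1]$; symmetrically on the right one obtains $[T_2', T_2]$ after renormalising labels by $k$. That the pair $(T_1', T_2')$ ranges over all pairs with $T_1' \le T_1$, $T_2' \le T_2$ follows because a right rotation never crosses the root $k$ except to attach right-subtree roots below $k$, so that $T' \le T$ factors exactly as a lower tree in each subtree together with a choice of how many roots of $\dec(T_2')$ sit below $k$. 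This last degree of freedom — the decreasing relations $v \prec k$ attaching some initial segment of the roots of $I_2$ to $k$ — is precisely the free choice generated by conditions \ref{def:composition:cond:incr} and \ref{def:composition:cond:decr} of Definition~\ref{def:composition}. Hence summing $[T', T]$ over $T' \le T$ produces, for each fixed $(I_1, I_2)$ in $S_1 \times S_2$, exactly the full fan of interval-posets in $\mathbb{B}(I_1, I_2)$, giving $S = \mathbb{B}(S_1, S_2)$.

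**The hard part will be** the bookkeeping in the middle step: verifying rigorously that the decreasing relations of $[T', T]$ split \emph{without interaction} across $k$, i.e.\ that no term $[T', T]$ forces a relation between the left block $\{1, \dots, k-1\}$ and the right block $\{k+1, \dots, n\}$, and that every admissible attachment of right-block roots to $k$ corresponds to a genuine lower tree $T'$. The first half is exactly the argument given after Definition~\ref{def:composition} (conditions \ref{def:composition:cond:incr} and \ref{def:composition:cond:decr} forbid cross-block relations), so I would cite that; the second half is where I must confirm that rotating the root edge downward realises each allowed decreasing configuration as an actual tree $T' \le T$, which is the genuinely combinatorial content and the step most likely to need care.
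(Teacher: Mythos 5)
Your overall strategy is the paper's: cut every interval-poset of $S$ at the root vertex $k=|T_1|+1$, observe that the two blocks carry $\inc(T_1)$ and $\inc(T_2)$ plus decreasing relations, and match the free choice of decreasing relations into $k$ with the fan of terms produced by $\mathbb{B}$. The non-interaction across $k$ is also handled correctly. However, the step you yourself flag as ``the genuinely combinatorial content'' --- that \emph{every} admissible configuration (a pair $I_1\in S_1$, $I_2\in S_2$, together with any legal attachment of an initial segment of roots of $I_2$ to $k$) is realised by an actual tree $T'\leq T$ --- is asserted but not proved, and the justification you sketch (``a right rotation never crosses the root $k$ except to attach right-subtree roots below $k$'', ``$T'\leq T$ factors exactly as a lower tree in each subtree together with a choice of how many roots sit below $k$'') is essentially a restatement of the proposition to be proved. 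Chasing rotations to establish this directly would be laborious and is not needed.

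The missing ingredient is Proposition~\ref{prop:comb-prop-trivial}~\ref{prop:comb-prop-minmax-inclusion}, applied to the initial interval $[T_0,T]$, whose interval-poset is exactly the increasing forest $\mathcal{T}=\inc(T)$ with no decreasing relations: it says that the intervals $[T',T]$ with $T'\leq T$ are \emph{precisely} the valid interval-posets obtained from $\mathcal{T}$ by adding only decreasing relations. This converts both sides of the identity into purely poset-theoretic objects --- $S$ is the sum of all decreasing extensions of $\mathcal{T}$, and $\mathbb{B}(S_1,S_2)$ is the sum of all decreasing extensions of $\mathcal{T}_1\sqcup\{k\}\sqcup\mathcal{T}_2$ compatible with conditions \ref{def:composition:cond:incr} and \ref{def:composition:cond:decr} of Definition~\ref{def:composition} --- after which the two collections coincide by inspection of the increasing relations, with no reference to rotations or to the Tamari order at all. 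If you insert that citation in place of your rotation argument, your proof closes and becomes the paper's proof; without it, the surjectivity direction remains a genuine gap.
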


With this new proposition, Theorem \ref{thm:smaller-trees} would be fully proven by induction on the size of the tree. The initial case is trivial, and then if we assume that $\mathcal{P}(S_1) = \mathcal{B}_{T_1}(x)$ and $\mathcal{P}(S_2) = \mathcal{B}_{T_2}(x)$, Proposition \ref{prop:combinatorial-equivalence-composition} tells us that $\mathcal{P}(\mathbb{B}(S_1,S_2)) = B(\mathcal{B}_{T_1}, \mathcal{B}_{T_2})$.

\begin{proof}
Let $T$ be a binary tree of size $n$. The initial interval $\mathcal{T} = [T_0, T]$, is given by the increasing bijection of Definition \ref{def:LSRB-RSLB}, it is a poset containing only increasing relations. By Proposition \ref{prop:comb-prop-trivial}, \ref{prop:comb-prop-minmax-inclusion}, the sum of all intervals $[T',T]$ is given by all possible ways of adding decreasing edges to the poset $\mathcal{T}$.

The increasing poset $\mathcal{T}$ can be formed recursively from the increasing posets $\mathcal{T}_1$ and $\mathcal{T}_2$ of the subtrees $T_1$ and $T_2$ as shown in Figure \ref{fig:recursive-forest}.  The new vertex $k = \vert T_1 \vert +1$ is placed so that $i \prec k$ for all $i \in \mathcal{T}_1$ and the vertices of $\mathcal{T}_2$ are just shifted by $k$. Now, let $I$ be an interval of the sum $S$, $I$ contains the poset $\mathcal{T}$ and some extra decreasing relations. Let $I_1$ and $I_2$ be the subposets formed respectively by vertices $1,\dots,k-1$, and $k+1, \dots, n$. By construction, the posets $I_1$ and $I_2$ contain respectively the forest posets $\mathcal{T}_1$ and $\mathcal{T}_2$ and some extra decreasing relations. This means that $I_1$ appears in $S_1$ and $I_2$ appears in $S_2$. And we have that $I$ appears in $\mathbb{B}(I_1,I_2)$. Indeed, conditions \ref{def:composition:cond:I1} and \ref{def:composition:cond:I2} of Definition \ref{def:composition} are true by construction and conditions \ref{def:composition:cond:incr}   and \ref{def:composition:cond:decr} are true because the increasing relations of $I$ are exactly the ones of $\mathcal{T}$. 

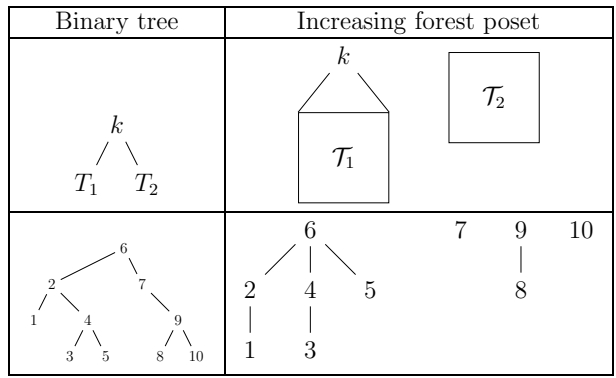
\begin{figure}[ht]
\centering
\scalebox{0.8}{
\begin{tabular}{|c|c|}
\hline
Binary tree
&
Increasing forest poset
\\
\hline
\begin{tikzpicture}
\node(k) at (0.5,0){$k$};
\node(T1) at (0,-1){$T_1$};
\node(T2) at (1,-1){$T_2$};
\draw (T1) -- (k);
\draw (T2) -- (k);
\end{tikzpicture} &
\begin{tikzpicture}
\node(k) at (0,0){$k$};
\node[draw, minimum size = 1.5cm](T1) at (0,-1.7){$\mathcal{T}_1$};
\node[draw, minimum size = 1.5cm](T2) at (2.5,-0.7){$\mathcal{T}_2$};
\draw (k) -- (T1.north east);
\draw (k) -- (T1.north west);
\end{tikzpicture}
\\
\hline
\scalebox{0.6}{
\begin{tikzpicture}
\node(T6) at (2.5,0){6};
\node(T2) at (0.5,-1){2};
\node(T1) at (0,-2){1};
\node(T4) at (1.5,-2){4};
\node(T3) at (1,-3){3};
\node(T5) at (2,-3){5};
\node(T7) at (3,-1){7};
\node(T9) at (4,-2){9};
\node(T8) at (3.5,-3){8};
\node(T10) at (4.5,-3){10};
\draw (T1) -- (T2);
\draw (T3) -- (T4);
\draw (T5) -- (T4);
\draw (T4) -- (T2);
\draw (T2) -- (T6);
\draw (T8) -- (T9);
\draw (T10) -- (T9);
\draw (T9) -- (T7);
\draw (T7) -- (T6);
\end{tikzpicture}
}
&
\begin{tikzpicture}
\node(T6) at (1,0){6};
\node(T2) at (0,-1){2};
\node(T4) at (1,-1){4};
\node(T5) at (2,-1){5};
\node(T1) at (0,-2){1};
\node(T3) at (1,-2){3};
\node(T7) at (3.5,0){7};
\node(T9) at (4.5,0){9};
\node(T10) at (5.5,0){10};
\node(T8) at (4.5,-1){8};
\draw (T2) -- (T6);
\draw (T4) -- (T6);
\draw (T5) -- (T6);
\draw (T1) -- (T2);
\draw (T3) -- (T4);
\draw (T8) -- (T9);
\end{tikzpicture}
\\
\hline
\end{tabular}
}
\caption{The recursive construction of $\mathcal{T}$ from $\mathcal{T}_1$ and $\mathcal{T}_2$.}
\label{fig:recursive-forest}
\end{figure}

Conversely, if $I_1$ and $I_2$ are two elements of respectively $S_1$ and $S_2$, their increasing relations are exactly the ones from respectively $\mathcal{T}_1$ and $\mathcal{T}_2$ which makes all interval-posets $I$ of $\mathbb{B}(I_1, I_2)$ an element of $S$. Indeed, by definition of the composition, the increasing relations of $I$ are exactly the ones of $\mathcal{T}$.
\end{proof}

For a given tree $T$ (with increasing poset $\mathcal{T}$), the coefficient of the monomial with maximal degree in $x$ in $\mathcal{B}_T$ is always 1. It corresponds to the minimal tree of the Tamari order, or to the interval with no decreasing relations, \emph{i.e.}, $\mathcal{T}$. The interval with the maximal number of decreasing relations corresponds to $[T,T]$. An example of $\mathcal{B}_T$ and of the computation of smaller trees is presented in Figure \ref{fig:BTExample}.

\begin{figure}[ht]
\scalebox{0.8}{
\begin{tabular}{cc}
\begin{tabular}{c}
\begin{tikzpicture}
\node(T1) at (0,-2){1};
\node(T2) at (0.5,-3){2};
\node(T3) at (1,-1){3};
\node(T4) at (1.5,0){4};
\node(T5) at (2,-2){5};
\node(T6) at (2.5,-1){6};
\draw (T2) -- (T1);
\draw (T1) -- (T3);
\draw (T3) -- (T4);
\draw (T5) -- (T6);
\draw (T6) -- (T4);
\end{tikzpicture}
\\
\shortstack[l]{
$\mathcal{B}_2 = x$
\\
$\mathcal{B}_1 = x + x^2$
\\
$\mathcal{B}_3 = x^2 + x^3$
\\
$\mathcal{B}_5 = x$
\\
$\mathcal{B}_6 = x^2$
\\
$\mathcal{B}_4 = x^3 + 2x^4 + 2x^5 + x^6$
}
\end{tabular}
&
\begin{tabular}{|c|c||c|c|}
\hline
$T' \leq T$
&
$[T',T]$
&
$T' \leq T$
&
$[T',T]$
\\
\hline
\scalebox{0.6}{
\begin{tikzpicture}
\node(T1) at (0,-4){1};
\node(T2) at (0.5,-3){2};
\node(T3) at (1,-2){3};
\node(T4) at (1.5,-1){4};
\node(T5) at (2,0){5};
\node(T6) at (2.5,1){6};
\draw (T1) -- (T2);
\draw (T2) -- (T3);
\draw (T3) -- (T4);
\draw (T4) -- (T5);
\draw (T5) -- (T6);
\end{tikzpicture}
}
&
\begin{tikzpicture}
\node(T4) at (0.5,0){4};
\node(T3) at (0.5,-1){3};
\node(T1) at (0,-2){1};
\node(T2) at (1,-2){2};
\node(T6) at (1.5,0){6};
\node(T5) at (1.5,-1){5};
\draw (T1) -- (T3);
\draw (T2) -- (T3);
\draw (T3) -- (T4);
\draw (T5) -- (T6);
\end{tikzpicture}
&
\scalebox{0.6}{
\begin{tikzpicture}
\node(T1) at (0,-3){1};
\node(T2) at (0.5,-4){2};
\node(T3) at (1,-2){3};
\node(T4) at (1.5,-1){4};
\node(T5) at (2,0){5};
\node(T6) at (2.5,1){6};
\draw (T2) -- (T1);
\draw (T1) -- (T3);
\draw (T3) -- (T4);
\draw (T4) -- (T5);
\draw (T5) -- (T6);
\end{tikzpicture}
}
&
\begin{tikzpicture}
\node(T4) at (0.5,0){4};
\node(T3) at (0.5,-1){3};
\node(T1) at (0,-2){1};
\node(T2) at (1,-3){2};
\node(T6) at (1.5,0){6};
\node(T5) at (1.5,-1){5};
\draw (T1) -- (T3);
\draw[dashed] (T2) -- (T3);
\draw (T2) -- (T1);
\draw (T3) -- (T4);
\draw (T5) -- (T6);
\end{tikzpicture}
\\
\hhline{|=|=#=|=|}
\scalebox{0.6}{
\begin{tikzpicture}
\node(T1) at (0,-3){1};
\node(T2) at (0.5,-2){2};
\node(T3) at (1,-1){3};
\node(T4) at (1.5,0){4};
\node(T5) at (2,-1){5};
\node(T6) at (2.5,1){6};
\draw (T1) -- (T2);
\draw (T2) -- (T3);
\draw (T3) -- (T4);
\draw (T5) -- (T4);
\draw (T4) -- (T6);
\end{tikzpicture}
}
&
\begin{tikzpicture}
\node(T4) at (0.5,0){4};
\node(T3) at (0.5,-1){3};
\node(T1) at (0,-2){1};
\node(T2) at (1,-2){2};
\node(T6) at (1.5,0){6};
\node(T5) at (1.5,-1){5};
\draw (T1) -- (T3);
\draw (T2) -- (T3);
\draw (T3) -- (T4);
\draw (T5) -- (T6);
\draw (T5) -- (T4);
\end{tikzpicture}
&
\scalebox{0.6}{
\begin{tikzpicture}
\node(T1) at (0,-2){1};
\node(T2) at (0.5,-3){2};
\node(T3) at (1,-1){3};
\node(T4) at (1.5,0){4};
\node(T5) at (2,-1){5};
\node(T6) at (2.5,1){6};
\draw (T2) -- (T1);
\draw (T1) -- (T3);
\draw (T3) -- (T4);
\draw (T5) -- (T4);
\draw (T4) -- (T6);
\end{tikzpicture}
}
&
\begin{tikzpicture}
\node(T4) at (0.5,0){4};
\node(T3) at (0.5,-1){3};
\node(T1) at (0,-2){1};
\node(T2) at (1,-3){2};
\node(T6) at (1.5,0){6};
\node(T5) at (1.5,-1){5};
\draw (T1) -- (T3);
\draw[dashed] (T2) -- (T3);
\draw (T2) -- (T1);
\draw (T3) -- (T4);
\draw (T5) -- (T6);
\draw (T5) -- (T4);
\end{tikzpicture}
\\
\hhline{|=|=#=|=|}
\scalebox{0.6}{
\begin{tikzpicture}
\node(T1) at (0,-3){1};
\node(T2) at (0.5,-2){2};
\node(T3) at (1,-1){3};
\node(T4) at (1.5,0){4};
\node(T5) at (2,-2){5};
\node(T6) at (2.5,-1){6};
\draw (T1) -- (T2);
\draw (T2) -- (T3);
\draw (T3) -- (T4);
\draw (T5) -- (T6);
\draw (T6) -- (T4);
\end{tikzpicture}
}
&
\begin{tikzpicture}
\node(T4) at (0.5,0){4};
\node(T3) at (0.5,-1){3};
\node(T1) at (0,-2){1};
\node(T2) at (1,-2){2};
\node(T6) at (1.5,-1){6};
\node(T5) at (1.5,-2){5};
\draw (T1) -- (T3);
\draw (T2) -- (T3);
\draw (T3) -- (T4);
\draw (T5) -- (T6);
\draw[dashed] (T5) -- (T4);
\draw (T6) -- (T4);
\end{tikzpicture}
&
\scalebox{0.6}{
\begin{tikzpicture}
\node(T1) at (0,-2){1};
\node(T2) at (0.5,-3){2};
\node(T3) at (1,-1){3};
\node(T4) at (1.5,0){4};
\node(T5) at (2,-2){5};
\node(T6) at (2.5,-1){6};
\draw (T2) -- (T1);
\draw (T1) -- (T3);
\draw (T3) -- (T4);
\draw (T5) -- (T6);
\draw (T6) -- (T4);
\end{tikzpicture}
}
&
\begin{tikzpicture}
\node(T4) at (0.5,0){4};
\node(T3) at (0.5,-1){3};
\node(T1) at (0,-2){1};
\node(T2) at (1,-3){2};
\node(T6) at (1.5,-1){6};
\node(T5) at (1.5,-2){5};
\draw (T1) -- (T3);
\draw[dashed] (T2) -- (T3);
\draw (T2) -- (T1);
\draw (T3) -- (T4);
\draw (T5) -- (T6);
\draw[dashed] (T5) -- (T4);
\draw (T6) -- (T4);
\end{tikzpicture}
\\
\hline
\end{tabular}
\end{tabular}
}
\caption{Example of the computation of $\mathcal{B}_T$ and list of all smaller trees with associated intervals }
\label{fig:BTExample}
\end{figure}
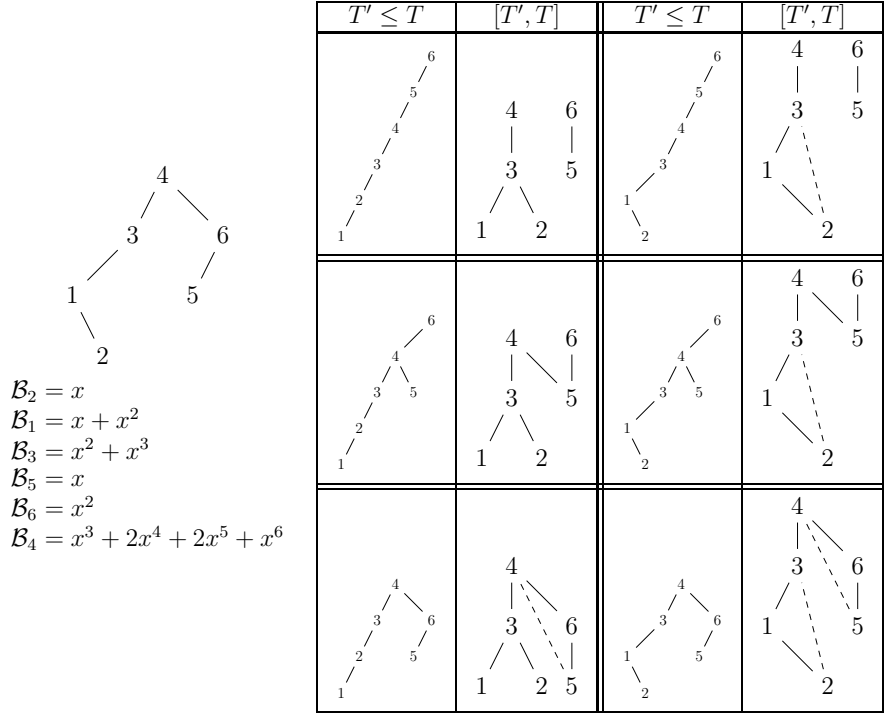

\section{Final comments}
\label{sec:final}

\subsection{Bivariate polynomials}

In some very recent work \cite{ChapBiVar}, Chapoton computed some bivariate polynomials that seem to be similar to the ones we study. By computing the first polynomials of  \cite[formula (7)]{ChapBiVar}, one notices \cite{BiVar} that for $b=1$ and $t = 1 - 1/x$ is equal to $\mathcal{B}_T(x)$, where $T$ is a binary tree with no left subtree. The non planar rooted tree corresponding to $T$ is the non planar version of the tree given by the decreasing bijection of Definition \ref{fig:lsrb-rslb}, \emph{i.e.}, transforming left children of a node into its brothers. 

A $b$ parameter can be also be added to our formula. For an interval $[T',T]$, it is either the number of nodes in $T'$ which have a right subtree, or in the interval-poset the number of nodes $x$ with a relation $y \prec x$ and $y>x$. By a generalization of the linear function $\mathcal{P}$, one can associate a monomial in $b$, $x$, and $y$ with each interval-poset. The bilinear form now reads:

\begin{equation}
B(f,g) = y \left( xbf \frac{x g - g_{x=1}}{x - 1} -bxfg + xfg \right),
\end{equation}
where $f$ and $g$ are polynomials in $x$, $b$, and $y$. Proposition~\ref{prop:combinatorial-equivalence-composition} still holds, since a node with a decreasing relation is added in all terms of the composition but one. As an example, in Figure \ref{fig:composition}, one has $B(y^3x^2b,y^3x^2b) = y^7(x^5b^2 + x^4b^3 + x^3b^3)$.

With this definition of the parameter $b$, the bivariate polynomials $\mathcal{B} 
_T(x,b)$ where $T$ has no left subtree seem to be exactly the ones computed by Chapoton in \cite{ChapBiVar} when taken on $t = 1 - 1/x$. This correspondence and its meaning in terms of algebra and combinatorics should be explored in some future work. 

\subsection{$m$-Tamari}
 
The Tamari lattice on binary trees can also be described in terms of Dyck paths. A Dyck path is a path on the grid formed by north and east steps, starting at $(0,0)$ and ending at $(n,n)$ and never going under the diagonal. One obtains a Dyck path from a binary tree by reading it in postfix order and writing a north step for each empty tree (also called leaf) and an east step for each node, and by ignoring the first leaf. As an example, the binary tree of Figure \ref{fig:BTExample} gives the following path: $N,N,E,E,N,E,N,N,E,N,E,E$. The rotation consists in switching an east step $e$ (immediately followed by a north step) with the shortest translated Dyck path starting right after $e$. One can now consider paths that end in $(mn,n)$ and stay above the line $x = my$, called \emph{$m$-ballot} paths and the same rotation operation will also give a lattice \cite{BergmTamari}.

It is called the $m$-Tamari lattice, a formula counting the number of intervals was conjectured in \cite{BergmTamari} and was proven recently in \cite{mTamari}. The authors use a functional equation that is a direct generalization of \eqref{eq:bilinear-equation}. Let $\Phi_m(x,y)$ be the generating function of intervals of the $m$-Tamari lattice   where $y$ is the size $n$ and $x$ a statistic called number of contacts, then \cite[formula (3)]{mTamari} reads

\begin{equation}
\label{eq:mfunctional-equation}
\Phi_m(x,y) = x + B_m(\Phi,\Phi, \dots, \Phi),
\end{equation}
where $B_m$ is a $m$-linear form defined by 
\begin{align}
B_m(f_1, \dots, f_m) &:= xy f_1 \Delta(f_2 \Delta(\dots \Delta(f_m))\dots), \\
\Delta(g) &:= \frac{g(x,y) - g(1,y)}{x - 1}.
\end{align}

Expanding \eqref{eq:functional-equation}, we obtain a sum of $m$-ary trees. A process is described in \cite{mTamari} to associate a $m$-ballot path with a $m$-ary tree: the tree is read in prefix order, from the right to the left and each leaf (resp. node) is coded by an east (resp. north) step. Note that this process is not consistent with the classical bijection between Dyck path and binary trees: a different definition of the rotation is given which slightly changes the Tamari lattice and could be generalized to $m$-Tamari. However, by computer exploration, one notices that Theorem \ref{thm:smaller-trees} still holds: for a given Dyck path, the polynomials obtained by the postfix and prefix tree interpretations of the path are equal. More generally, given a $m$-ballot path $D$, let $T$ be the $m$-ary tree obtained by a prefix reading. Then the polynomial $\mathcal{B}_{m,T}$ of $T$ where $B_m$ is applied to the nodes and $x$ to the leafs counts the number of $m$-ballot paths lower than $D$ in the $m$-Tamari order. We shall prove this result in future work.

\bibliographystyle{plain}
\label{sec:biblio}
\bibliography{fpsac-article}

\end{document}